\newtheorem{prop}{Proposition}
\newtheorem{corollary}[prop]{Corollary}
\newtheorem{theorem}[prop]{Theorem}
\newtheorem{lemma}[prop]{Lemma}
\theoremstyle{remark}
\newcommand{\be}{\begin{equation}}
\newcommand{\ee}{\end{equation}}
\newcommand{\bes}{\begin{equation*}}
\newcommand{\ees}{\end{equation*}}
\newcommand{\bea}{\begin{eqnarray}}
\newcommand{\eea}{\end{eqnarray}}
\newcommand{\beas}{\begin{eqnarray*}}
\newcommand{\eeas}{\end{eqnarray*}}
\DeclareMathOperator{\FC}{FC}
\begin{document}

\title{Chameleon Coins}

\author{Tanya Khovanova \and Konstantin Knop \and Oleg Polubasov}

\maketitle

\begin{abstract}
We discuss coin-weighing problems with a new type of coin: a
chameleon. A chameleon coin can mimic a fake or a real coin, and it can choose
which coin to mimic for each weighing independently. 

We consider a mix of $N$ coins that include exactly two non-real
coins: one fake and one chameleon. The task is to use a balance
to find two coins one of which has to be fake. We find bounds for the
number of coins for which we can find a solution in a given number of
weighings. We also introduce an important idea of solution scaling.
\end{abstract}

\section{Introduction}

We all have played with problems where we had real coins and fake (counterfeit) coins. The second author invented a new type of a coin: \textit{a chameleon coin}. This coin can mimic a fake or a real coin. It also can choose which coin to mimic for each weighing independently. 

In coin-weighing literature many authors prefer using the word \textit{counterfeit}, rather than \textit{fake}. Unfortunately, the word ``counterfeit'' starts with the same letter as the word ``chameleon.'' Thus, we prefer to use the word ``fake'' and we utilize the letter F to denote the \textbf{f}ake coin. The letter C is reserved for the \textbf{c}hameleon coin.

You cannot find the chameleon coin if it does not want to be found, because it can consistently behave as either real or fake. 

Suppose we have real, fake and chameleon coins in the mix. The usual task of identifying the fake coins using a balance scale cannot be achieved: the chame\-le\-ons can pretend to be fake coins. What we can do is to find a small number of coins some of which are guaranteed to be fake.

Consider the simplest setup, when we have one fake coin and one chameleon in our mix of $N$ coins. The fake coin is lighter than real coins. All real coins weigh the same. Our task now is: to find \textit{two} coins, one of which has to be fake. We will call the finding of these two coins using the minimum number of weighings an \textit{FC-problem}. We denote $\FC(N)$ the smallest number of weighings for which the FC-problem with $N$ coins has a solution. The standard research goal would be to find the lower and upper bounds for $\FC(N)$.

It is often more convenient to solve the inverse problem: for a given number of weighings $w$ find or bound the largest number of coins $N(w)$ for which the problem has a solution.

The task of solving an $N$-coin problem in $w$ weighings we call a $(w,N)$-\textit{problem}. If the problem is solvable, we call its solution a $(w,N)$-\textit{solution} or a $(w,N)$-\textit{algorithm}.

We start with the main result, describing the bounds for $N(w)$ in Section~\ref{sec:main}. In Section~\ref{sec:st} we discuss the connection between our problem and the problem of finding two fake coins. 

In Section~\ref{sec:pmt} we produce the proof of the main result. We start the proof with an educational example of how to solve the problem for $3^n$ coins in $2n$ weighings in Section~\ref{sec:firstbound}. We continue by describing what happens for a small number of weighings in Section~\ref{sec:smalltotal}. Section~\ref{sec:scaling} discusses an important idea of scaling. We find bounds for scalable solutions in Section~\ref{sec:scalablebounds}. We introduce the notion of pseudo-solution in Section~\ref{sec:ps}. The ideas of scaling and pseudo-solutions allow us to reach our bounds and finish the main proof.

We follow with an important idea of invariants and monovariants for our algorithms in Section~\ref{sec:invariants}. Finally we discuss the bounds for $\FC(N)$ in Section~\ref{sec:any}.

\section{Main Result}\label{sec:main}

Our main result is the following theorem:

\begin{theorem}\label{thm:main}
The largest number of coins $N(w)$ for which there exists a $(w,N)$-algorithm and where the number of weighings does not exceed 5 is the following:
\begin{enumerate}
\item $N(0) = 2$
\item $N(2) = 4$
\item $N(3) = 6$
\item $N(4) = 11$
\item $N(5) = 20$.
\end{enumerate}
For larger values of $w$ the following bound holds:
\begin{enumerate}
\item $N(6+2k) \geq 36\cdot 3^k$, $k \geq 0$
\item $N(5+2k) \geq 20\cdot 3^k$, $k \geq1$.
\end{enumerate}
\end{theorem}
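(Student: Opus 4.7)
The plan is to split the theorem into two halves: the exact values of $N(w)$ for $w \in \{0,2,3,4,5\}$ in part (1), and the exponential lower bounds in part (2). The case $N(0) = 2$ is immediate: with no weighings, any algorithm commits in advance to two coins, so only $N = 2$ is covered because then the two non-real coins are forced to be precisely the outputs. For $w \in \{2,3,4,5\}$, I would prove matching upper and lower bounds: the lower bound is an explicit $(w,N)$-algorithm, and the upper bound is an adversary argument in which the adversary picks, jointly, the positions of the fake and the chameleon together with a per-weighing response for the chameleon, and shows that for $N$ one larger, two indistinguishable scenarios remain in which any candidate output pair fails.

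For part (2), the key tool is a \emph{scaling lemma}: from a $(w, N)$-algorithm, one can construct a $(w+2, 3N)$-algorithm. The natural implementation is to partition $3N$ coins into three groups of $N$, run the base algorithm in parallel across the three groups with a common weighing pattern, and use the two extra weighings to locate the group containing the fake. Once the lemma is in place, part (2) follows by induction on $k$ from the base values $N(5) \geq 20$ and $N(6) \geq 36$: the first is inherited from part (1), while the second requires an explicit construction strictly better than the naive $3^3 = 27$ one obtains from repeatedly applying the $3^n$-in-$2n$-weighings construction of Section~\ref{sec:firstbound}.

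The main obstacles are three. First, the upper bounds in part (1) are not a standard information-theoretic count: the chameleon's adaptive per-weighing behavior interacts with the algorithm's branching, so the adversary argument must track refined configurations consistent with past responses rather than merely pigeon-hole weighing outcomes. Second, the construction $N(6) \geq 36$ must break past the naive power-of-three barrier, and this is where the \emph{pseudo-solution} framework of Section~\ref{sec:ps} is essential: a pseudo-solution relaxes the success criterion in a way that composes well under scaling, and a short disambiguation postlude then converts it into a genuine algorithm. Third, in the scaling lemma the chameleon may distribute its impersonations freely across the three groups and across weighings, so proving that the parallel execution of the base algorithm together with the two extra weighings really yields a valid output pair is the core technical step; this is again handled via the scalable/pseudo-solution machinery. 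With these three ingredients in hand, the two inductive chains $N(6+2k) \geq 36 \cdot 3^k$ and $N(5+2k) \geq 20 \cdot 3^k$ close the proof.
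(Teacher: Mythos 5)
Your plan has a genuine gap at its central step: the scaling lemma as you state it --- ``from a $(w,N)$-algorithm one can construct a $(w+2,3N)$-algorithm'' --- is false, and the theorem you are proving already refutes it: $N(2)=4$ but $N(4)=11<12$, so the $(2,4)$-algorithm does not scale. The paper's whole point in Section~\ref{sec:scaling} is that only \emph{scalable} solutions admit this step, and it isolates a checkable criterion (Corollary~\ref{thm:scalabilitycondition}: each of the two output coins must have been on the scale at some point, and not always on the same pan; otherwise the scaled instance ends with six candidate coins and $\binom{6}{2}=15>9$ cases cannot be resolved in two weighings). Consequently your induction for part (2) cannot start from ``$N(5)\geq 20$ inherited from part (1)'': you need a specifically \emph{scalable} $(5,20)$-solution and a \emph{scalable} $(6,36)$-solution, which the paper exhibits explicitly in Appendices~\ref{app:5-20} and~\ref{app:6-36}, together with the observation that a scaling of a scalable solution is again scalable so the chain continues. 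Your attribution of $N(6)\geq 36$ to the pseudo-solution framework is also off: the pseudo-solution machinery only yields $(6,33)$ from the $(4,11)$-pseudo-solution; the bound $36$ comes from a directly constructed scalable $(6,36)$-solution.

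The second gap is in part (1). For $w=3,4,5$ the claimed exact values $6$, $11$, $20$ lie strictly below the information-theoretic bound ($7$, $13$, $22$), so the upper bounds do not follow from counting outcomes, and your proposed adversary argument is only a placeholder --- you give no mechanism for how the adversary's adaptive chameleon eliminates exactly the gap between the ITB and the true value. The paper does not have such an argument either: it establishes these upper bounds by exhaustive computer search (as it states immediately after Theorem~\ref{thm:main}), using the invariants $D$, $E$, $F$ of Section~\ref{sec:invariants} only to prune the search. Unless you can actually produce the adversary strategy, this half of the theorem remains unproved in your write-up; and the matching lower bounds still require the explicit $(4,11)$- and $(5,20)$-algorithms, which you would need to supply.
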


The first part of the theorem is due to our computer program that checked all possibilities. The bound for more than five weighings is proven in Section~\ref{sec:pmt}.

In Section~\ref{sec:st} we study the connection of our problem with the FF-problem. In the \textit{FF-problem} we are given that $N$ coins have exactly two fake coins in the mix. The fake coins weigh the same and are lighter than real coins. The goal is to find the fake coins. We use the connection between the FF and FC-problems to produce the information-theoretic bound (ITB) that serves as the upper bound for $N(w)$:

\begin{theorem}
\label{thm:bounds}
$N(w)\leq \lfloor \sqrt{2\cdot 3^w + 1/4} + 1/2\rfloor $.
\end{theorem}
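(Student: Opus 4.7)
The plan is to convert any FC-algorithm into an injection from unordered pairs of coin positions into length-$w$ sequences of weighing outcomes, and then apply the standard ternary information-theoretic count.

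Suppose $\mathcal{A}$ is a $(w,N)$-algorithm. Fix two distinct positions $i \neq j$, place the fake at $i$ and the chameleon at $j$, and let the chameleon adversarially choose to mimic a fake coin in every weighing. Under this strategy positions $i$ and $j$ both behave as fakes throughout, so the tilt observed in each weighing is a function only of the unordered pair $\{i,j\}$; since $\mathcal{A}$ is deterministic given the outcome history, the entire outcome sequence is also a function of $\{i,j\}$. Denote this sequence by $r(\{i,j\})$.

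I claim the map $\{i,j\} \mapsto r(\{i,j\})$ is injective. Correctness of $\mathcal{A}$ forces its output $S$, a two-element set, to contain the true fake in every scenario. In the scenario just described the fake is at $i$, so $i \in S$. Now run $\mathcal{A}$ with the roles of F and C swapped (fake at $j$, chameleon at $i$, again mimicking a fake): the weighing outcomes are identical, so the output $S$ is the same set, and correctness now forces $j \in S$. Since $|S|=2$ and $i \neq j$, necessarily $S=\{i,j\}$. Thus $r(\{i,j\})$ determines $\{i,j\}$, so distinct pairs yield distinct outcome sequences.

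The counting step is then immediate: there are $\binom{N}{2}$ unordered pairs and at most $3^w$ ternary outcome sequences of length $w$, so $\binom{N}{2}\leq 3^w$, that is, $N(N-1)\leq 2\cdot 3^w$. Rewriting as $(N-1/2)^2 \leq 2\cdot 3^w + 1/4$ and taking a floor yields $N \leq \lfloor \sqrt{2\cdot 3^w + 1/4} + 1/2 \rfloor$. The conceptual heart of the argument, and the only place any real thought is needed, is the F/C-symmetry used for injectivity: forcing the chameleon to imitate a fake is precisely what lets an FC-algorithm recover the unordered pair, so the FF information-theoretic bound transfers to FC; everything else is bookkeeping.
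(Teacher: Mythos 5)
Your proof is correct and follows essentially the same route as the paper: force the chameleon to mimic a fake so that the algorithm must isolate the unordered pair $\{\text{F},\text{C}\}$, then compare the $\binom{N}{2}$ pairs against the $3^w$ possible outcome sequences. Your F/C-swap argument simply makes rigorous the step the paper asserts in one line (``at the end we will find the fake coin and the chameleon'').
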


Our results for up to ten weighings are presented in Table~\ref{tbl:itbound}. The second row shows the largest number of coins for which a solution with the given number of weighings is found. The numbers in bold show the proven best solution.

\begin{table}[h!]
  \begin{center}
\begin{tabular}{| r | r | r | r | r | r | r | r | r | r | r |}
  \hline                       
  number of weighings $w$ & 1 & 2 & 3 & 4 & 5 & 6 & 7 & 8 & 9 & 10\\
  $(w,n)$-algorithm found  & \textbf{2} & \textbf{4} & \textbf{6} & \textbf{11} & \textbf{20} & 36 & 60 & 108 & 180 & 324\\
  ITB for $N(w)$ & 3 & 4 & 7 & 13 & 22 & 38 & 66 & 115 & 198 & 344\\
  \hline  
\end{tabular}
  \end{center}
  \caption{Algorithm results and an ITB for a small number of weighings}
\label{tbl:itbound}
\end{table}

\section{The ITB and the FF-problem}\label{sec:st}

Now we prove the information-theoretic bound for $N(w)$ from Theorem~\ref{thm:bounds}.

\begin{proof}[Proof of Theorem ~\ref{thm:bounds}]
Suppose the chameleon always pretends to be fake. That means at the end we will find the fake coin and the chameleon. We need to differentiate between $\binom{N}{2}$ possibilities. Therefore, the smallest number of weighings supplied by the ITB must be at least $\log_3 \binom{N}{2}$. Or, the number of coins $N(w)$ that can be processed in $w$ weighings must be not more than$\lfloor \sqrt{2\cdot 3^w + 1/4} + 1/2\rfloor $. 
\end{proof}

If the chameleon always pretends to be fake, the FC-problem becomes the FF-problem. It is known that the largest number of coins that can be processed in the FF-problem is very close to the ITB \cite{Knop, Li}. In fact, the second and the third authors wrote a program and found the solutions to FF-problems that coincide with the bound for up to 10 weighings inclusive \cite{KP}.

If, on the other hand, the chameleon decides not to behave as a fake every time, then as soon as we find the fake coin we do not need to look for the second coin, the chameleon. That means it might be possible to find the fake coin faster then the ITB. As we are looking for the number of weighings that guarantees finding the fake coin, then the problem of finding two fake coins gives us a bound.

\begin{lemma}
Any algorithm that solves the FC-problem can be used to solve the FF-problem.
\end{lemma}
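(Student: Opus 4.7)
The plan is to show, directly, that running a given $(w,N)$-algorithm $\mathcal{A}$ for the FC-problem on an FF-instance already outputs the two fake coins, so the FF-problem is solved in the same $w$ weighings with no modification needed.

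First I would fix such an algorithm $\mathcal{A}$ and an FF-instance on the same $N$ coins with fakes $F_1$ and $F_2$. The key observation is that the chameleon in the FC-problem is allowed to mimic a fake coin on every weighing; this is a legitimate chameleon strategy because the choice is made independently for each weighing. Consequently, the sequence of pan-readings produced when $\mathcal{A}$ is run on the FF-instance is identical to the sequence it would see on the FC-instance $(F=F_1,\ C=F_2)$ with $C$ always mimicking a fake, and also identical to the sequence it would see on the FC-instance $(F=F_2,\ C=F_1)$ with $C$ always mimicking a fake.

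Next, I would use the determinism of $\mathcal{A}$ together with its correctness guarantee in both interpretations. Because $\mathcal{A}$ is a decision algorithm whose output depends only on the observed weighing outcomes, the pair of coins it names on the FF-instance equals the pair it would name in each of the two FC-interpretations above. Correctness of $\mathcal{A}$ in the first interpretation forces the output pair to contain $F_1$; correctness in the second interpretation forces it to contain $F_2$. A two-element set that contains both $F_1$ and $F_2$ must equal $\{F_1, F_2\}$, so $\mathcal{A}$ identifies exactly the two fake coins, which is precisely the goal of the FF-problem.

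The only real subtlety — and the step I would write most carefully — is the legitimacy of the ``always fake'' chameleon strategy and the fact that the weighing outcomes genuinely agree across the FF- and FC-interpretations; once that is spelled out, the argument is just determinism plus two applications of the FC-correctness. No weighings are added, so any $w$-weighing FC-algorithm yields a $w$-weighing FF-algorithm.
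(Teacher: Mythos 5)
Your proof is correct. It rests on the same underlying insight the paper exploits (an FF-instance is observationally indistinguishable from an FC-instance in which the chameleon always mimics a fake coin), but you organize the argument quite differently, and in a way that is arguably tighter. The paper proceeds by classifying the possible outputs of an FC-algorithm --- a single coin, or a pair consisting of the fake and the chameleon, or the fake and a real coin --- and argues that the branches producing the first and third types can never be reached on an FF-instance, then appeals to the informal invariant that ``we never lose the fake coin'' to conclude the output is the two fakes. You instead run the algorithm once on the FF-instance and interpret the identical execution trace twice, with the roles of $F_1$ and $F_2$ as fake/chameleon swapped; determinism plus the FC-correctness guarantee in each interpretation forces both fakes into the output, and since the output has at most two elements it must be exactly $\{F_1,F_2\}$. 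Your route buys you two things: the unreachability of the one-coin and fake-plus-real branches falls out automatically rather than needing separate ad hoc arguments, and the step the paper leaves vague (why \emph{both} fakes survive to the output) becomes a one-line consequence of the two interpretations. The one point to keep spelled out carefully, as you note, is that the always-fake strategy is a legitimate chameleon behavior and that it reproduces the FF pan-readings exactly --- which holds because a chameleon mimicking a fake weighs the same as the fake, an assumption the paper itself relies on for its information-theoretic bound.
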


\begin{proof}
At the end of an FC-algorithm we have one coin or two coins left. The output of one coin means that this coin is the only one that can be fake. In the FF-problem such an outcome is impossible. Therefore, the corresponding branches of the FC-algorithm will not be used.

Two coins at the end of the FC-algorithm could mean one of two possibilities:

\begin{enumerate}
\item The output coins are the fake and the chameleon.
\item The output coins are the fake and a real coin. 
\end{enumerate}

In the second case we do not know which of the coins is fake and which one is real. But in any case, the chameleon is not in the output. This means the chameleon pretended to be real at some point. Thus this variant in the FF-problem will not appear.

In the FC-algorithm, we never lose the fake coin. That means using the same process for the FF-problem we will not lose a fake coin. That means our output would always consist of two fake coins.
\end{proof}

The ITB can also be deduced as an immediate corollary of the above lemma: The number of coins, $N$, in an $(w,N)$-solution cannot be more than the largest possible number of coins in  an FF-algorithm with the same number of weighings. On the other hand, not every algorithm for the FF-problem can be used to solve the FC-problem. The difference is already seen when we restrict ourselves to one weighing. If we have one fake and one chameleon, we can not process more than 2 coins. If we have 2 fake coins, we can process three coins. The FC-problem is way more complicated than the FF-problem as it is not known how the chameleon decides to behave.

\section{The Proof of the Main Theorem}\label{sec:pmt}

Our goal is to process the largest number of coins with a given number of weighings. The main idea behind our algorithms is scaling. Sometimes given a ($w,N$)-algorithm, it is possible to construct a ($w+2$,$3N$)-algorithm. The method that we use is called \textit{scaling} and ($w$,$N$)-algorithms/solutions that allow us to do that we call \textit{scalable}.  

In the next section we present a ($2n$,$3^n$)-solution to get us started.

\subsection{$3^n$ coins in $2n$ weighings}\label{sec:firstbound}

To describe a ($2n$,$3^n$)-solution let us start with 2 weighings and 3 coins. 

We use this example to explain our pseudo-code. Each line begins with its number. After it we have the weighing in the format 1 10 v 4 5 meaning coins 1 and 10 are weighed versus coins 4 and 5. The weighing is followed by a colon, after which we describe in order actions for three different results: equality, the first pan is lighter, and the second pan is lighter. Each action is one of the following:

\begin{itemize}
\item $\Rightarrow L$ means go to line $L$.
\item $(a)$ means only coin $a$ is fake.
\item $(a,b)$ means the fake coin is either $a$ or $b$.
\item $()$ means this branch is impossible and there is no output.
\item \textit{sym} indicates the symmetry of the weighing and its result; therefore the resulting go-to line is omitted as being equivalent to another line.
\end{itemize}

The line numbers after $\Rightarrow $ in line $L$ are always $3L+1$, $3L+2$ and $3L+3$. The \textit{sym} symbol implies that line $3L+3$ is omitted as a symmetric version of line $3L+2$.

This is a (2,3)-solution in our pseudo-code:

\hrulefill

\noindent\texttt{\indent First weighing:\\
0. 1 v 2 : $\Rightarrow$ 1, $\Rightarrow$ 2, sym.\\
\indent Second weighing:\\
1. 1 v 3 : (2,3), (1,2), (3).\\
2. 1 v 3 : (1,3), (1), (3).}

\hrulefill

In case the pseudo-code is difficult to follow, here is a detailed explanation. Denote the 3 coins $x_1$, $x_2$, and $x_3$. First, we compare coins $x_1$ and $x_2$. If the scale balances or $x_1$ is lighter, we compare coins $x_1$ and $x_3$. The case when $x_1$ is heavier is symmetric to the case when it is lighter and we will omit it from the following list explaining the outcomes of two weighings:

\begin{enumerate}
\item $x_1 = x_2$ and $x_1 = x_3$. Coin $x_1$ must be the chameleon. The fake coin is either $x_2$ or $x_3$.
\item $x_1 = x_2$ and $x_1 < x_3$. Coin $x_3$ cannot be fake. The fake coin is one of $x_1$ and $x_2$.
\item $x_1 = x_2$ and $x_1 > x_3$. Coin $x_1$ cannot be fake. If coin $x_2$ is fake, then $x_1$ has to be the chameleon and $x_3$ has to be real. This creates a contradiction. The fake coin is $x_3$.
\item $x_1 < x_2$ and $x_1 = x_3$. Similarly to case (ii), the fake coin is one of $x_1$ and $x_3$.
\item $x_1 < x_2$ and $x_1 < x_3$. Coin $x_1$ must be fake.
\item $x_1 < x_2$ and $x_1 > x_3$. Coin $x_1$ must be the chameleon, and coin $x_3$ must be fake.
\end{enumerate}

Now let us go back to $3^n$ coins. We start with dividing coins into three parts of equal sizes: $X_1$, $X_2$, and $X_3$. In the first weighing we compare $X_1$ with $X_2$. The case when $X_1 > X_2$ is omitted below as it can be resolved by symmetry. In the second weighing we compare $X_1$ against $X_3$.

Therefore, we have the following 6 cases for the first two weighings. To understand the future idea of scaling the reader may compare these cases to the 6 cases above when the total number of coins is 3:

\begin{enumerate}[(1)]
\item $X_1 = X_2$ and $X_1 = X_3$. Pile $X_1$ contains the chameleon. Piles $X_2$ or $X_3$ contain the fake coin.
\item $X_1 = X_2$ and $X_1 < X_3$. Piles $X_1$ and $X_2$ each contain a non-real coin, but we do not know which pile contains which coin.
\item $X_1 = X_2$ and $X_1 > X_3$. Pile $X_1$ cannot contain the fake coin. If $X_2$ contains the fake, then $X_1$ has to contain the chameleon and $X_3$ has to contain only real coins. This creates a contradiction. The fake coin is in pile $X_3$, which may or may not contain the chameleon.
\item $X_1 < X_2$ and $X_1 = X_3$. Similarly to case (ii), piles $X_1$ and $X_3$ each contain a non-real coin, but we do not know which pile contains which coin.
\item $X_1 < X_2$ and $X_1 < X_3$. Pile $X_1$ contains the fake coin. It may or may not contain the chameleon.
\item $X_1 < X_2$ and $X_1 > X_3$. Pile $X_3$ contains the fake coin and pile $X_1$ contains the chameleon.
\end{enumerate}

The result can be summarized as one of these three groups:

\begin{itemize}
\item Cases 1 and 6: We have a pile of size not more than $2 \cdot 3^{n-1}$ that contains the fake coin and does not contain the chameleon. 
\item Cases 2 and 4: We have two piles of size $3^{n-1}$, one containing the fake coin and the other the chameleon, but we do not know which is which. 
\item Cases 3 and 5: We have a pile of size $3^{n-1}$ that contains the fake coin and may or may not contain the chameleon.
\end{itemize}

Now the next step. 

In Cases 1 and 6, we have a pile that contains the fake coin and does not contain the chameleon. That means we are in the setting of the first-ever coin puzzle \cite{Dyson, Guy, Schell} of finding a single light fake coin out of $N$ coins. The standard method allows us to find one fake coin out of $3^{n-1}$ coins in $n-1$ weighings. Using the same method we can find two coins with one fake in $n-1$ weighings if the total number of coins does not exceed $2\cdot 3^{n-1}$. So in Cases 1 and 6 the total of $n+1$ weighings is enough to solve the problem.

In Cases 2 and 4, we have two piles each containing a non-real coin, but we do not know which pile contains which coin. We can process each pile separately as if it contains a false coin. After $n-1$ weighings for each pile we will end up with the fake coin and another coin which might or might not be the chameleon. The total number of weighings is $2n$.

In Cases 3 and 5, after two weighings we are at the same place as where we started but we have a pile 3 times smaller. Invoking induction we can find the two coins in $2n$ weighings.

The number of coins we can process with this algorithm for $2n$ weighings is $3^n$. We will later find faster algorithms.

\subsection{A small number of weighings}\label{sec:smalltotal}

Now we want to exhaustively discuss what happens for a small number of weighings.

Let us consider one weighing. If it is unbalanced, then the fake coin cannot be in the heavier pile. If it balances, the fake coin can be anywhere. Thus one weighing cannot solve the problem for any number of coins exceeding 2.

How many coins can we process in 2 weighings? We already know from Section~\ref{sec:firstbound} that we can process 3 coins. Can we do better? Yes, we can. Below we present our (2,4)-solution using the pseudo-code we described in Section~\ref{sec:firstbound}.

\hrulefill

\noindent\texttt{\indent First weighing:\\
0. 1 v 2 : $\Rightarrow$ 1, $\Rightarrow$ 2, sym.\\
\indent Second weighing:\\
1. 1 2  v  3 4 : (3,4), (1,2), (3,4).\\
2. 3 v 4 : (1), (1,3), (1,4).}

\hrulefill

In case the pseudo-code is difficult to follow, here is a detailed explanation. Denote the 4 coins $x_1$, $x_2$, $x_3$, and $x_4$. First, we compare coins $x_1$ and $x_2$. 

Suppose the first weighing balances: $x_1=x_2$. That means, if one of these two coins is fake, then the other has to be the chameleon. Then we compare $\{x_1,x_2\}$ against $\{x_3,x_4\}$. If the second weighing is unbalanced, then the lighter pan has the fake coin. If it is balanced, then the fake coin has to be on the opposite pan from the chameleon. That means $x_1$ and $x_2$ cannot both be non-real coins, which means the fake coin is one of $x_3$ and $x_4$.

Suppose the first weighing is not balanced: $x_1 < x_2$. That means $x_2$ cannot be fake and $x_1$ is either the fake or the chameleon. In the second weighing we  compare $x_3$ against $x_4$. If the second weighing balances, the fake coin cannot be there. Indeed, the fake coin can only balance against the chameleon, but the set $\{x_3,x_4\}$ cannot have both of them. If the second weighing is not balanced, then the heavier coin cannot be the fake one. Thus the two lighter coins contain the fake coin.

Thus we can process 4 coins in two weighings. Notice that this is more coins than we could process using the algorithm in Section~\ref{sec:firstbound}.

We performed an exhaustive computer search for the FC-problem. We found that the greatest number of coins that can be processed in two weighings is 4. Other computational results are in Table~\ref{tbl:smallvalues}. Starting from 6 weighings the computer was not powerful enough to completely answer the question. The last line shows the best known result for finding two fake coins \cite{KP}. The numbers in bold show the proven best solutions.

\begin{table}[h!]
  \begin{center}
\begin{tabular}{| r | r | r | r | r | r | r | r | }
  \hline                       
  number of weighings & 1 & 2 & 3 & 4 & 5 & 6 & 7\\
  \hline
  fake and chameleon & \textbf{2} & \textbf{4} & \textbf{6} & \textbf{11} & \textbf{20} & 36 & 62 \\
  two fake & \textbf{3} & \textbf{4} & \textbf{7} & \textbf{13} & \textbf{22} & \textbf{38} & \textbf{66}\\
  \hline  
\end{tabular}
  \end{center}
  \caption{Best known solutions for a small number of weighings}
\label{tbl:smallvalues}
\end{table}

Here we also show a (3,6)-solution. 

\hrulefill

\noindent\texttt{\indent First weighing:\\
0. 1 2 v 3 4 : $\Rightarrow$ 1, $\Rightarrow$ 2, sym.\\
\indent Second weighing:\\
1. 1 3 v 5 6 : $\Rightarrow$ 4, $\Rightarrow$ 5, (5, 6).\\
2. 1 v 2 : $\Rightarrow$ 7, $\Rightarrow$ 8,  sym.\\
\indent Third weighing:\\
4. 2 4 v 5 6 : (), (2, 4), (5, 6).\\
5. 1 4 v 2 3 : (1, 3), (1, 4), (2, 3).\\
7. 1 2 v 5 6 : (5, 6), (1, 2), (5, 6).\\
8. 5 v 6 : (1), (1, 5), (1, 6).}

\hrulefill

Note that the () in the output in line 4 means that this situation is impossible.

Line numbers are not consecutive because we skip line 3 as symmetric to line 2 and line 6 as unneeded (output (5, 6) was written at line 1). 

A (4,11)-solution, a (5-20)-solution, and a (6-36)-solution are presented in Appendix~\ref{app:4-11}, Appendix~\ref{app:5-20} and Appendix~\ref{app:6-36} respectively. 

Can we use the ideas we found for small number of weighings and extend them to a larger number of weighings? Our goal is to find the largest number of coins that can be processed with a given number of weighings. The main method here is scaling, and it will be covered in the next section.

\subsection{Scaling}\label{sec:scaling}

Let us go back to the classical problem of finding one fake coin, that is known to be lighter, from a set of coins. If a solution with $w$ weighings and $N$ coins exists, we can extend it to a solution with $w+1$ weighings and $3N$ coins. We do it by replacing every coin in the set of $N$ coins by three coins and perform $w$ weighings on these $N$ groups. At the end we know which group of three coins contains the fake coin and we can find it in one weighing. We want to extend this idea to our situation, but it is not that straightforward.

Now let us describe scaling. Consider a set of weighings for $N$ coins. Imagine that each coin represents a triple of coins. We can use our set of $w$ weighings on $3N$ coins by treating each triple as one coin. This set of weighings is called \textit{scaling}.

We call a ($w$,$N$)-solution \textit{scalable} if scaling the first $w$ weighings of this solution to $3N$ coins can be extended to a ($w+2$,$3N$)-solution. Not every solution is scalable.

Let us consider two examples.

First example: a (2,3)-solution from the beginning of Section~\ref{sec:firstbound} is scalable. The resulting (4,9)-solution is exactly the one described in Section~\ref{sec:firstbound}. If we continue scaling we get the ($2n$,$3^n$)-solution from the same section.

Second example: a (2,4)-solution is not scalable. If it were, then a (4,12)-solution would have existed, but our exhaustive search showed that it does not exist.

Let us study the (2,4)-solution from Section~\ref{sec:smalltotal} and see why exactly it is not scalable. Suppose we try to scale the (2,4)-solution once. We have 12 coins that are divided into four groups of three: $X_1$, $X_2$, $X_3$, and $X_4$. Consider the scaling of the two weighings of the (2,4)-solution and see what we can conclude. The  cases up to symmetry are:

\begin{itemize}
\item $X_1 = X_2$ and $X_1 + X_2 = X_3+ X_4$. Group $X_3 + X_4$ contains the fake coin. It may not contain the chameleon. We \textbf{can} finish in two weighings.
\item $X_1 = X_2$ and $X_1 + X_2 < X_3+ X_4$. Groups $X_1$ and $X_2$ contain one non-real coin each. We \textbf{can} finish in two weighings.
\item $X_1 = X_2$ and $X_1 + X_2 > X_3+ X_4$. Group $X_3 + X_4$ contains the fake coin. It may or may not contain the chameleon. We \textbf{cannot} finish in two weighings because the FC-problem cannot be solved for 6 coins in two weighings.

\item $X_1 < X_2$ and $X_3=X_4$. Group $X_1$ contains the fake coin. It may or may not contain the chameleon. We \textbf{can} finish in two weighings.
\item $X_1 < X_2$ and $X_3 < X_4$. Groups $X_1$ and $X_3$ contain one non-real coin each. We \textbf{can} finish in two weighings.
\end{itemize}

If some solutions are scalable and others are not, why are we interested in the scaling idea? The beauty is that it is easy to say which solution is scalable and which is not.

After these examples we can formulate when scaling \textbf{does not work}. Suppose at the end of the ($w$,$N$)-solution we found two coins $a$ and $b$ one of which is fake. These coins become 6 coins after the scaling and we know in what situations we cannot finish the solution in two weighings:

\begin{lemma}
We cannot scale that algorithm that outputs two coins $a$ and $b$ if every weighing is one of the following:
\begin{enumerate}
\item $a$ and $b$ are in the same (lighter) pan.
\item $a$ and $b$ are not on the scale and the scale balances.
\item $a$ is on the lighter pan and $b$ is not on the scale.
\item $b$ is on the lighter pan and $a$ is not on the scale.
\end{enumerate}
Otherwise, we can scale.
\end{lemma}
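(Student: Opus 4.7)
My plan is to prove the lemma by separately handling the two implications, with the central object of analysis being the set of (fake, chameleon) configurations over the six coins $A \cup B$ that remain consistent with the observed results of the $w$ scaled weighings.

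For the forward direction---every weighing along the branch ending in output $(a,b)$ being of one of the four listed types implies the algorithm cannot be scaled---I would examine each listed configuration and verify that, given only the recorded weighing direction, every placement of the fake in $A \cup B$ admits a consistent placement of the chameleon that also lies in $A \cup B$, via an appropriate mimicking behavior. The mechanism is that each listed case cannot distinguish the fake in a light/off-scale role from the chameleon playing the same role, so the chameleon can always be steered to accompany the fake inside $A \cup B$ without changing any weighing outcome. Iterating across $w$ such weighings preserves the full richness of the 6-coin FC-problem inside $A \cup B$. Since $N(2)=4<6$, no two further weighings can solve this residual problem, so the scaled algorithm cannot be completed.

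For the reverse direction---the presence of at least one non-listed weighing in the branch suffices to scale---I would enumerate the remaining weighing configurations and show that each yields a structural constraint strong enough to admit a two-weighing finisher. The configurations that force the chameleon \emph{outside} $A\cup B$ are the easy ones: $a,b$ on the same balanced pan (the chameleon must supply the compensating deficit on the opposite pan), $a,b$ both off an unbalanced scale (the chameleon is the only possible source of imbalance), and configurations in which $a$ or $b$ sits on the heavier pan of an unbalanced weighing (which actually pin the fake to a single triple). In each such case the residual problem reduces to a standard lighter-coin search on at most six coins, solvable in two weighings since $3^2\geq 6$. The delicate case is $a,b$ on opposite pans with a balanced scale: the chameleon may still lie inside $A\cup B$, but the configuration forces the fake and chameleon into \emph{different} triples. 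I would dispatch this case with an explicit protocol: first weigh $A$ against $B$; an unbalanced outcome identifies the triple containing the fake, and a balanced outcome corresponds to the swap-symmetric ambiguity, resolved by a follow-up weighing that exploits the different-triple constraint.

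The main obstacle is the construction and verification of the two-weighing finisher for the swap-symmetric ambiguity, where both ``fake in $A$, chameleon in $B$'' and ``fake in $B$, chameleon in $A$'' survive the first post-scaling weighing. One must design a single further weighing whose every outcome isolates a pair of coins guaranteed to contain the fake, and must verify that the chameleon's behavioral freedom in the new weighing cannot subvert the identification. I expect this reduces to a short finite case analysis once the constraint that fake and chameleon occupy different triples is fully exploited; the rest of the reverse direction, and the reduction to FC(6) in the forward direction, should then follow by the routine bookkeeping sketched above.
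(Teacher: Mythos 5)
Your reverse direction contains a concrete step that fails, and it is precisely the case you identify as the crux. In the ``$a$ and $b$ on opposite pans, balanced'' configuration, you propose to spend the first of the two remaining weighings on $A$ versus $B$. If the scale balances there (which the adversary can force: whichever triple holds the chameleon, it mimics the fake), all nine cross pairs $\{x,y\}$ with $x\in A$, $y\in B$ remain consistent and mutually indistinguishable, and for each such pair the final output must be exactly $\{x,y\}$. One remaining weighing has only three outcomes, so by pigeonhole some outcome is compatible with at least three distinct pairs and the protocol fails; no ``short finite case analysis'' can rescue it, because the failure is information-theoretic. The paper's finisher avoids this by \emph{never} trying to decide which triple holds the fake: it weighs $A_1$ versus $A_2$ and, separately, $B_1$ versus $B_2$, and outputs one nominee per triple (the lighter coin, or the left-out coin on balance). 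Since the original balanced weighing forces the chameleon onto the pan opposite the fake, the fake's triple consists of the fake and two real coins, so that triple's nominee is guaranteed to be the fake, while the other triple's nominee is harmless. This ``nominate from each triple without identifying the fake's triple'' idea is the missing ingredient. Separately, your claim that every other non-listed configuration forces the chameleon outside $A\cup B$ is not quite right: if, say, $a$ sits on a pan of a \emph{balanced} weighing while $b$ is off the scale, the configuration ``fake in $B$, chameleon in $A$ mimicking real'' survives, so that case also needs the per-triple nomination argument rather than a plain nine-coin search.

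In the forward direction your route (reduce to the residual six-coin problem and invoke $N(2)=4<6$) is in the same spirit as the paper's count $\binom{6}{2}=15>3^2$, but your stated justification is too weak for your stated conclusion: ``every placement of the fake admits \emph{some} companion placement of the chameleon in $A\cup B$'' does not imply that the full FC-problem on six coins survives. Indeed, when the branch contains both a type-(iii) and a type-(iv) weighing, the pairs with both non-real coins in the same triple are eliminated and only the nine cross pairs remain, so the count is exactly $3^2$ and neither your reduction nor the paper's raw count settles that subcase without a finer argument (the paper's write-up shares this imprecision). You should either verify case by case which pairs survive each listed weighing type and handle the nine-pair situation separately, or strengthen the adversary argument beyond pure counting.
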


\begin{proof}
First we show that we cannot scale in the given cases. After scaling we get 6 coins: one of them is fake and the chameleon may be there. That means if the chameleon pretends to be fake, we will have to distinguish 15 pairs of coins, while in two weighings the best we can do is distinguish nine possibilities.

Now we want to show that in all other cases we can scale. 

Suppose we know that coins $a$ and $b$ were on the scale at some point or another, but not always in the same pan.

Suppose there was a weighing when they were opposite each other. The weighing has to be balanced. Otherwise the heavier pan does not contain the fake coin, thus no coin from the heavier pan can be in the output of the algorithm. After the scaling each group corresponding to $a$ and $b$ contains not more than one non-real coin, which we can find in one weighing per group. Thus we can scale in this case.

Now suppose that $a$ and $b$ were never opposite each other on the scale. None of the coins was ever on the heavier pan as otherwise it would not have been in the output.

\begin{enumerate}
\item If both $a$ and $b$ were on lighter pans, then after the scaling the corresponding groups have exactly one non-real coin. 
\item Suppose both coins $a$ and $b$ only participated in balanced weighings. That means all weighings are balanced and the fake coin was always opposite the chameleon. That means neither $a$ or $b$ can be a chameleon, but one of them is fake and we do not know which one. After scaling one of the groups $a$ and $b$ contains the fake coin and the other group contains only real coins. (Note. The existence of such situation might seem counter-intuitive, so here is a (3-4)-solution where this happens. We compare coins 1 against 2, 3 against 4, and 2 against 4. If all the weighings balance, then either 1 or 3 are fake.)
\item The case when $a$ always participates in a balanced weighing and $b$ is on a lighter pan at least once is impossible, as in this case $a$ cannot be fake.
\end{enumerate}

We showed that all other cases are scalable.
\end{proof}

Notice that in the (2,4)-solution, the coins 3 and 4 are always together and they are the output in some of the cases. Therefore, the (2,4)-solution is not scalable.

\begin{corollary}\label{thm:scalabilitycondition}
We can scale if each of the output coins $a$ and $b$ was on the scale at some point, and they were not always on the same pan.
\end{corollary}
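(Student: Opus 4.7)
My plan is to derive the corollary directly from the preceding Lemma. The Lemma characterizes the non-scalable case as the one in which \emph{every} weighing takes one of four forbidden shapes relative to the output pair $\{a,b\}$; equivalently, scalability follows as soon as at least one weighing avoids all four shapes. So it suffices to produce such a weighing under the stated hypotheses.

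First I would record a standing observation: neither $a$ nor $b$ can ever sit on the heavier pan of a weighing, for then that coin would be provably not the fake and could not be in the output. Consequently, whenever $a$ or $b$ is on the scale in an unbalanced weighing, it must be on the lighter pan.

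The substantive step is to extract a non-forbidden weighing from the hypotheses. The assumption that each of $a,b$ appears on the scale, together with the assumption that they are not always on the same pan, yields a weighing $W$ in which $a$ and $b$ occupy opposite pans of the scale simultaneously. By the standing observation, neither coin can be on the heavier pan of $W$, so $W$ must balance. But a balanced weighing with both $a$ and $b$ on the scale fits none of the four forbidden types: types 1, 3, 4 all require an unbalanced outcome, while type 2 requires both coins to be off the scale. Hence by the Lemma, scaling is possible.

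The only real issue is justifying the existence of the witnessing weighing $W$: I would read the phrase ``not always on the same pan'' as asserting, in the presence of the first hypothesis, that some weighing has $a$ and $b$ on different pans of the scale. (The weaker reading that merely allows one of them to be off in a distinguishing weighing leaves trivial pathological configurations inside the Lemma's forbidden list, so the stronger reading is the one being used.) Once this is accepted, the argument is just the case check above; there is no further combinatorial content beyond the Lemma itself.
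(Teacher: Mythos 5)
There is a genuine gap. Your whole argument funnels through the claim that the hypotheses produce a weighing with $a$ and $b$ on opposite pans, and you obtain that claim only by reinterpreting ``not always on the same pan'' as ``on opposite pans of the scale in some weighing.'' That reading conflicts with the way the paper uses the condition. The proof of the preceding lemma takes the corollary's hypothesis verbatim (``coins $a$ and $b$ were on the scale at some point or another, but not always in the same pan'') and then explicitly splits into the case you keep (a weighing with $a$ and $b$ opposite each other, which must balance) and the cases you discard, namely those in which $a$ and $b$ are \emph{never} opposite each other: for instance $a$ alone on a lighter pan in one weighing and $b$ alone on a lighter pan in another, or each coin appearing only in balanced weighings. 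The paper even exhibits the latter situation concretely (the $(3,4)$-solution with output $(1,3)$ in which coins $1$ and $3$ are never weighed together). These configurations satisfy the corollary's hypothesis and carry its real content: if both coins were on lighter pans, the observed outcomes force each scaled group to contain exactly one non-real coin, so the two groups can be finished separately; if both coins appear only in balanced weighings, neither can be the chameleon, so one scaled group holds the fake and the other only real coins; and the mixed case is impossible. None of this appears in your write-up, so the scalability of these outputs is left unproved.

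Your justification for the stronger reading --- that the weaker reading leaves configurations sitting entirely inside the lemma's forbidden list, e.g.\ one weighing of type (iii) followed by one of type (iv) --- does identify a real imprecision in the lemma's statement. But the paper resolves that tension in the opposite direction: for such a history the counting argument behind non-scalability breaks down (the unbalanced outcomes already exclude the possibility that both non-real coins lie in the same output group, so strictly fewer than $\binom{6}{2}=15$ pairs survive), and the second half of the lemma's proof shows these histories are in fact scalable. The repair is therefore to supply the missing case analysis, not to strengthen the hypothesis of the corollary.
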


The beauty of scaling is that if we can scale once we can scale many times.

\begin{theorem}
A scaling of a scalable ($w$,$N$)-solution produces a scalable ($w+2$,$3N$)-solution.
\end{theorem}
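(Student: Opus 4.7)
I would verify, for every pair output $(a', b')$ of the scaled $(w+2, 3N)$-solution $S'$, the scalability condition of Corollary~\ref{thm:scalabilitycondition}, inheriting it from the corresponding pair output of the original $(w, N)$-solution $S$.

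First I would track the shape of pair outputs of $S'$. Along any branch of $S'$ whose first $w$ weighings simulate a branch of $S$ ending in a pair output $(a, b)$, the remaining two weighings act on the six coins of $A \cup B$, where $A, B$ are the triples that replaced $a, b$. Following the construction in the proof of the Lemma, in each sub-case that produces a pair output—namely the sub-case where $a, b$ appeared on opposite pans in a balanced weighing of $S$, and the sub-case where each of $a, b$ appeared on a lighter pan at some weighing—the two extra weighings compare coins within $A$ alone and within $B$ alone, so the resulting pair is cross-group: $a' \in A$ and $b' \in B$. In the remaining sub-case (where $a, b$ participate only in balanced weighings and are never on opposite pans, so the chameleon must lie outside $A \cup B$) and on branches following a singleton leaf of $S$, the two extra weighings instead pin down the fake outright, since $3^2 \geq 6 \geq 3$, and we output a singleton; such branches impose no scalability obligation.

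Next I would inherit scalability on cross-group pair outputs. Since $S$ is scalable, the Lemma guarantees a weighing $W$ of $S$ which is not one of the four bad weighings for $(a, b)$. Scaling preserves the pan arrangement of weighings: in the scaled weighing $W'$ of $S'$, the triples $A, B$ occupy the very positions $a, b$ held in $W$ (a specific pan, or off the scale), and along the branch of $S'$ under consideration the outcome of $W'$ agrees with that of $W$. Hence $a' \in A$ and $b' \in B$ occupy those same positions in $W'$ under that same outcome, so $W'$ fails to match any of the four bad weighings for $(a', b')$. By the Lemma, $(a', b')$ meets the scalability criterion, and therefore $S'$ is scalable.

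The main obstacle, I expect, is the routine but careful case-matching in the first step: one has to check in each sub-case of the Lemma's construction that the two extra weighings of $S'$ can be set up to produce only cross-group pair outputs (or singletons), never a pair with both coins inside a single triple. This is a direct inspection of the sub-cases already laid out in the Lemma, aided by the fact that two weighings are more than enough to identify the fake in any pile of size at most six that contains no chameleon.
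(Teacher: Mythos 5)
Your proposal is correct and takes essentially the same route as the paper: the paper's proof likewise observes that the scaled solution's output is one coin from each of the two groups replacing $a$ and $b$, and that these groups occupy exactly the pan positions $a$ and $b$ occupied, so the scalability criterion of Corollary~\ref{thm:scalabilitycondition} is inherited. Your write-up is somewhat more careful than the paper's one-line version, since you explicitly match the sub-cases of the scaling lemma to confirm that every pair output is cross-group and that the remaining branches end in singletons carrying no scalability obligation.
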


\begin{proof}
By Corollary~\ref{thm:scalabilitycondition} the output of a ($w$,$N$)-solution are coins $a$ and $b$ that were on the scale at some point, and not always on the same pan. After scaling, these two coins become two groups of three coins and the new output is two coins, one from each group. That means that the output of a ($w+2$,$3N$)-solution is two coins that were on the scale at some point and not always on the same pan. Therefore, the ($w+2$,$3N$)-solution is scalable.
\end{proof}

Our computer search found a scalable (4,10)-solution, which is shown in Appendix~\ref{app:4-10}. After scaling three times it becomes a (10,270)-solution, which is better than the (10,243)-solution we found in Section~\ref{sec:firstbound}.

Notice that the (3,6)-solution in Section~\ref{sec:smalltotal} is not scalable. Line 7 outputs coins 5 and 6 that were always together. 

Here we show a scalable (3,6)-solution:

\hrulefill

\noindent\texttt{\indent First weighing:\\
0. 1 2  v  3 4 : $\Rightarrow$ 1, $\Rightarrow$ 2, $\Rightarrow$ 3.  sym\\
\indent Second weighing:\\
1. 1  v  2 : $\Rightarrow$ 4, $\Rightarrow$ 5, $\Rightarrow$ 6.  sym \\
2. 5  v  6 : $\Rightarrow$ 7, $\Rightarrow$ 8, $\Rightarrow$ 9.  sym  \\
\indent Third weighing:\\
4. 3 5  v  4 6 : (5, 6), (3, 5), (4, 6). \\
5. 2 3  v  5 6 : (1, 4), (1, 3), (5, 6). \\
7. 1  v  2 : (1, 2), (1), (2). \\ 
8. 1  v  2 : (5), (1, 5), (2, 5). }

\hrulefill

Suppose there is a non-scalable solution. When scaling we might get six coins at the end that require three rather than two weighings to process. Interestingly, the scaled solution becomes itself scalable because there exist a scalable (3,6)-solution:

\begin{lemma}
Scaling a non-scalable ($w$,$N$)-solution generates a scalable ($w+3$,$3N$)-solution.
\end{lemma}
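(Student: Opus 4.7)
The plan is to take the non-scalable $(w,N)$-solution $S$, scale its $w$ weighings so they act on $3N$ coins (each original coin replaced by a triple), and then finish each branch with a block of at most $3$ additional weighings. After the scaled weighings, a branch of $S$ that originally output a pair $(a,b)$ leaves us with a six-coin set $A\cup B$ containing the fake and possibly the chameleon, while a branch that originally output a single coin $a$ leaves us with a triple $A$ containing the fake (the chameleon being elsewhere among the other $3(N-1)$ coins).

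For the pair-output branches I would plug in the scalable $(3,6)$-solution exhibited earlier in this section; this uses exactly three weighings. For the single-output branches, one weighing on two members of $A$ identifies the fake exactly. The total worst-case depth is therefore $w+3$. To verify scalability of the resulting algorithm via Corollary~\ref{thm:scalabilitycondition}, observe that single-coin outputs impose no scalability constraint on the output pair (there is none), while every pair-output branch inherits its output from the scalable $(3,6)$-solution, whose defining property is precisely that its output pair is on the scale at some point within its three weighings and not always on the same pan. That condition manifestly persists in the full composite algorithm, so the $(w+3,3N)$-algorithm is scalable.

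The main subtlety is confirming that the $(3,6)$-finish remains correct in the specialized six-coin situations produced by scaling---for instance ``$A\cup B$ contains the fake and the chameleon, one per triple'' (as in the analogues of cases~2 and~4 in Section~\ref{sec:firstbound}) or ``$A\cup B$ contains the fake but not the chameleon'' (the remaining cases, where the chameleon sits in one of the other $3(N-2)$ coins and is untouched during the finishing weighings). In each such scenario the chameleon's possible behavior on the last three weighings is merely a restriction of its fully general behavior in a standalone six-coin FC-instance; a correct FC-algorithm on six coins must already accommodate every such behavior, including ``always impersonate a real coin.'' Consequently both the correctness of the finish and the scalability of its output pair are inherited, completing the argument.
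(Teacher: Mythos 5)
Your overall strategy is the paper's own (the paper offers only a one-sentence justification: scale the $w$ weighings and finish the leftover six-coin groups with the scalable $(3,6)$-solution), and your treatment of the pair-output branches is sound, including the key observation that an absent chameleon merely restricts the adversary's behavior. However, your handling of the single-output branches contains a genuine error. When the original solution concludes $(a)$, it is \emph{not} true that after scaling the chameleon must lie outside the triple $A$. If both non-real coins sit in the same group $G$, that group has a weight deficit of $1$ or $2$ in every weighing, which is indistinguishable (the balance reports only a sign) from the configuration ``coin $g$ is fake and the chameleon mimics a real coin throughout''; and single-coin conclusions are precisely the ones reached under such behaviors. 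Hence the branch $(a)$ is reachable with both the fake and the chameleon inside $A$. Your one-weighing finish then fails: weigh two coins of $A$, let the chameleon mimic the fake so the pans balance, and you wrongly declare the third coin fake. Even the weaker goal of narrowing to two candidates cannot be met in one weighing, since a balance leaves all three coins as possible fakes.

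The lemma is unharmed: replace the one-weighing finish by the $(2,3)$-solution of Section~\ref{sec:firstbound}, which handles exactly this ``one fake, at most one chameleon among three coins'' situation in two weighings, comfortably within the three-weighing budget, and whose output pairs were on the scale at some point and never always on the same pan, so Corollary~\ref{thm:scalabilitycondition} still applies to the composite algorithm. With that repair your argument goes through.
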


\subsection{Bounds for scalable weighings}\label{sec:scalablebounds}

In this section we present an upper bound for the number of coins that can participate in a scalable ($w$,$N$)-solution. 

If a ($w$,$N$)-solution is scalable, then there exists a ($w+2k$,$N\cdot 3^k$)-solution, for any positive $k$. Therefore, $\binom{N \cdot 3^k}{2} \leq 3^{w+2k}$. Equivalently, 
$$N \cdot 3^k \leq \lfloor \sqrt{2 \cdot 3^{w+2k}+1/4} + 1/2\rfloor.$$ 

Taking the limit when $k$ tends to infinity, we get:

$$N \leq \lfloor \sqrt{2 \cdot 3^{w}}\rfloor.$$

This bound either matches the ITB for any solution or is less than the ITB by 1. For example, from this bound we can see that a scalable (4,13)-solution does not exist without performing a computer search. We call this bound an \textit{induced bound from scalability}. 

In addition to that, we have information-theoretic consideration to suggest a slightly stronger bound:

\begin{lemma}\label{thm:ubducedscalablebound}
For a scalable ($w$,$N$)-solution $N(N+1)/2 \leq 3^w$, or 
$N\leq \lfloor \sqrt{2\cdot 3^w + 1/4} - 1/2\rfloor $.
\end{lemma}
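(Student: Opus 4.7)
The plan is to bound $N$ by counting distinct leaves of the decision tree of a scalable $(w,N)$-solution. The tree has at most $3^w$ leaves, so it suffices to exhibit $N(N+1)/2$ leaves that must be pairwise distinct. I will produce them by running the algorithm against two families of adversarial inputs and then use the scalability hypothesis (in the form of the preceding Lemma / Corollary~\ref{thm:scalabilitycondition}) to forbid collisions.

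\textbf{Family A (pair leaves).} For each unordered pair $\{f,c\}$ of coins, consider the configuration with fake $=f$, chameleon $=c$, and the chameleon strategy ``always mimic the fake.'' The weighings are then completely determined and are symmetric under the swap $(f,c)\leftrightarrow(c,f)$, so the algorithm reaches a single leaf $L_A(\{f,c\})$ whose output must contain both $f$ and $c$; hence the output is exactly $\{f,c\}$ and different unordered pairs yield different leaves. This accounts for $\binom{N}{2}$ leaves.

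\textbf{Family B (singleton leaves).} For each coin $f$, consider the configuration with fake $=f$ and the chameleon strategy ``always mimic a real coin.'' The weighings now depend only on $f$, so a single leaf $L_B(f)$ is reached, and its output must contain $f$. I claim these $N$ leaves are pairwise distinct and disjoint from Family A; if so, we get $\binom{N}{2}+N=N(N+1)/2$ distinct leaves, hence $N(N+1)/2\le 3^w$, and solving the quadratic gives the stated bound.

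The heart of the proof, and the main obstacle, is tying the two distinctness claims back to the scalability condition. For Family B, if $L_B(f_1)=L_B(f_2)$ with $f_1\neq f_2$, both $f_i$ lie in the (necessarily two-coin) output, so the output is $\{f_1,f_2\}$; matching the ``only $f_1$ light'' outcome to the ``only $f_2$ light'' outcome at every weighing forces $f_1$ and $f_2$ to sit on the same pan, or both off the scale, at \emph{every} weighing. This places every weighing in one of cases (i)--(iv) of the Lemma preceding Corollary~\ref{thm:scalabilitycondition}, contradicting scalability of the output pair. For the Family A / Family B overlap, suppose $L_A(\{f_1,c_1\})=L_B(f_2)$; then $f_2\in\{f_1,c_1\}$, and after WLOG taking $f_2=f_1$, a short case analysis on each weighing $X$ vs.\ $Y$ (comparing $\mathrm{sign}(|X\cap\{f_1,c_1\}|-|Y\cap\{f_1,c_1\}|)$ to $\mathrm{sign}(|X\cap\{f_1\}|-|Y\cap\{f_1\}|)$) shows that $c_1$ must be off the scale whenever $f_1$ is off, and on $f_1$'s pan whenever $c_1$ is on. Again the pair $\{f_1,c_1\}$ is never on opposite balanced pans, contradicting scalability. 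The symmetric case $f_2=c_1$ is identical. Establishing this case analysis cleanly, and checking that Family A leaves cannot degenerate to single-coin outputs (which follows from the $(f,c)\leftrightarrow(c,f)$ symmetry), are the only real bookkeeping in the argument.
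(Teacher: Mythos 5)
Your proof is correct, and it lands on the same decomposition $\binom{N}{2}+N=N(N+1)/2$ that the paper uses, but by a genuinely different mechanism. The paper's proof is a terse information-theoretic count on the \emph{scaled} solution: the $N$ coins become $N$ groups, and the $w$ group-level weighings must distinguish $\binom{N}{2}$ answers when the fake and the chameleon land in different groups (a fake-mimicking chameleon forces both groups to be located) plus $N$ answers when they share a group. Your argument never scales: you stay with the original $N$-coin decision tree, exhibit $N(N+1)/2$ adversary configurations (your Family A matching the ``different groups'' count, Family B the ``same group'' count), and rule out leaf collisions by showing any collision would force the output pair into cases (i)--(iv) of the lemma preceding Corollary~\ref{thm:scalabilitycondition}, contradicting scalability. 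Your sign-matching case analysis is sound: a Family B collision forces $f_1,f_2$ to be both off the scale or together on the lighter pan at every weighing, and an A/B collision forces the chameleon off the scale whenever the fake is off and never on the opposite pan, so every weighing is non-scalable type. What your route buys is rigor and self-containment --- the distinctness claims reduce to an explicitly proved combinatorial criterion, whereas the paper's version tacitly assumes the first $w$ group-level weighings must fully resolve the group-level answer; what it costs is the per-weighing bookkeeping, which you have carried out correctly.
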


\begin{proof}
Suppose we have $N$ coins that later will become groups of coins. If the chameleon and the fake coin are in different groups, we need to find both of them as the chameleon can pretend to be fake. If the chameleon and the fake coin are in the same group we just need to find the group. Overall we have to produce $N(N+1)/2$ answers.
\end{proof}

The ITB for scalability differs from the ITB for any solution $N(w)$exactly by 1. The results are summarized in Table~\ref{tbl:itboundscalable}.

\begin{table}[h!]
  \begin{center}
\begin{tabular}{| r | r | r | r | r | r | r | r | r | r | r |}
  \hline                       
  number of weighings $w$ & 1 & 2 & 3 & 4 & 5 & 6 & 7 & 8 & 9 & 10\\
  ITB bound for $N(w)$ & 3 & 4 & 7 & 13 & 22 & 38 & 66 & 115 & 198 & 344\\
induced bound for scalability & 2 & 4 & 7 & 12 & 22 & 38 & 66 & 114 & 198 & 343\\
ITB for scalability & 2 & 3 & 6 & 12 & 21 & 37 & 65 & 114 & 197 & 343\\
  \hline  
\end{tabular}
  \end{center}
  \caption{Bounds for scalability}
\label{tbl:itboundscalable}
\end{table}

For example, the ITB for scalalbility allows us to deduce, without performing a computer search, that the (3,7)-scalable solution does not exist.

\subsection{Pseudo-solutions}\label{sec:ps}

According to our exhaustive computer search a scalable (4,11)-solution does not exist.  But we found a non-solution that is scalable. How can a non-solution become a solution?

Consider an example. Suppose you have an algorithm that outputs $k$ coins such that the fake is there and the chameleon is not. If we replace each coin by three coins and add two more weighings, then we can find two coins containing one fake out of total of 18 coins. Thus, if $k < 7$, the non-solution after scaling becomes a solution. We call such a non-solution a \textit{pseudo-solution}. 

Here is a (4,11)-pseudo-solution:

\hrulefill

\noindent\texttt{\indent First weighing:\\
0. 1 2 3 4  v  5 6 7 8 : $\Rightarrow$ 1, $\Rightarrow$ 2, $\Rightarrow$ 3.  sym\\
\indent Second weighing:\\
1. 5 6 1 2 3  v  7 4 9 10 11 : $\Rightarrow$ 4, $\Rightarrow$ 5, $\Rightarrow$ 6.\\
2. 1 9  v  2 10 : $\Rightarrow$ 7, $\Rightarrow$ 8, $\Rightarrow$ 9.  sym\\
\indent Third weighing:\\
4. 5 1  v  6 2 : $\Rightarrow$ 13, $\Rightarrow$ 14, $\Rightarrow$ 15.  sym\\
5. 1  v  2 : $\Rightarrow$ 16, $\Rightarrow$ 17, $\Rightarrow$ 18.  sym\\
6. 8 4  v  9 1 : $\Rightarrow$ 19, $\Rightarrow$ 20, $\Rightarrow$ 21.\\
7. 1 10 11  v  3 4 5 : $\Rightarrow$ 22, $\Rightarrow$ 23, $\Rightarrow$ 24.\\
8. 9  v  11 : $\Rightarrow$ 25, $\Rightarrow$ 26, (1,11).\\
\indent Fourth weighing:\\
13. 7  v  8 : \{3,4,9,10,11\}, (3,7), (8).\\
14. 7  v  4 : \{1,5,8,9,10,11\}, (1,7), (4,5).\\
16. 5  v  6 : (3,8), (3,5), (3,6).\\
17. 5  v  6 : (1,8), (1,5), (1,6).\\
19. 7 10  v  9 11 : (10,11), (7,10), (9,11).\\
20. 1 7  v  10 11 : (4,8), (4,7), (10,11).\\
21. 10  v  11 : (7,9), (9,10), (9,11).\\
22. 2 9  v  3 5 : (4,11), (2,9), (3,11).\\
23. 10  v  11 : (1,2), (1,10), (11).\\
24. 3  v  4 : (3,4), (3), (4).\\
25. 3  v  4 : (1), (1,3), (1,4).\\
26. 3  v  4 : (1,9), (3,9), (4,9).}

\hrulefill

Lines 13 and 14 correspond to the lack of solution. They produce a list of 5 or 6 coins that do not contain the chameleon, but contain the fake.

After scaling, this pseudo-solution becomes a scalable (6,33)-solution. Indeed, these 5(6) coins after the scaling become 15(18) coins, that do not contain the chameleon. In two weighings we can reduce this group to 2 coins as required. After more scalings we get a ($4+2k$,$11 \cdot 3^k$) solution.

We proved computationally that a (4,12)-pseudo-solution deso not exist.

We also found a scalable (6,36)-solution presented in Appendix~\ref{app:6-36}, which propagates to a ($6+2n$,$36\cdot 3^n$)-solution, providing our bound for an even number of weighings. In addition, we found a scalable (5,20)-solution presented in Appendix~\ref{app:5-20}, which propagates to a ($5+2n$,$20\cdot 3^n$)-solution, providing a bound for an odd number of weighings.

This completes the proof of Theorem~\ref{thm:main}.

\section{Invariants}\label{sec:invariants}

In this section we want to discuss some invariants and monovariants that allowed us to speed up the program and find scalable solutions and pseudo-solutions.

To do it we want to represent $N$ coins that after scaling will become $N$ groups of three coins as a graph with $N$ vertices. A directed edge from $a$ to $b$ means if the group corresponding to $a$ contains a fake, then the group corresponding to $b$ may contain a chameleon. A loop from $a$ to $a$ means after the scaling, the group corresponding to $a$ may contain both non-real coins. We will call two edges from $a$ to $b$ and from $b$ to $a$ a \textit{double edge}.

The starting graph, before the weighings, has edges from every vertex to every vertex, including a loop at every vertex. The starting graph for three vertices is in Figure~\ref{fig:starting}.

\begin{figure}[htbp]
\centering
\includegraphics[scale=0.3]{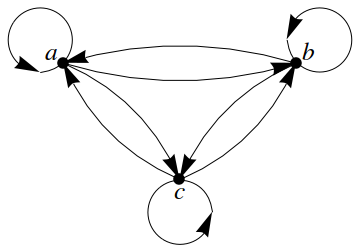}
\caption{The starting graph for three vertices}\label{fig:starting}
\end{figure}

We consider the following quantities:

\begin{itemize}
\item $D$ is the number of double edges. Before the weighings $D=N(N-1)/2$.
\item $E$ is the number of vertices that have outgoing edges leading to vertices without outgoing edges. Before the weighings $E=0$. We will also use $E$ to refer to those vertices.
\item $F$ is the number of loops. Before the weighings $F=N$.
\end{itemize}

After each weighing we replace the graph with three new graphs, corresponding to the three different outcomes. In Figure~\ref{fig:afterfirstweighing} we show what happens with the starting graph above after the first weighing comparing $a$ and $b$. The graph on the left represents the outcome $a=b$. If the fake coin is in the group corresponding to $a$, then the chameleon can only be in $b$, and vice versa. If the fake coin is in the group corresponding to $c$, then the chameleon can be anywhere. The graph in the middle represents the outcome $a<b$. If the fake coin is in the group corresponding to $a$, the chameleon can be anywhere. The fake coin cannot be in $b$. If the fake coin is in $c$, then the chameleon is in $a$. The graph on the right represents the outcome $a>b$ and is symmetric to the graph in the middle.

\begin{figure}[htbp]
\centering
$\begin{array}{ccc}
\includegraphics[scale=0.25]{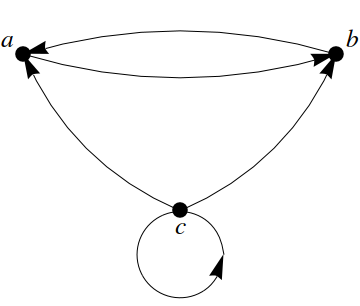} &
\includegraphics[scale=0.3]{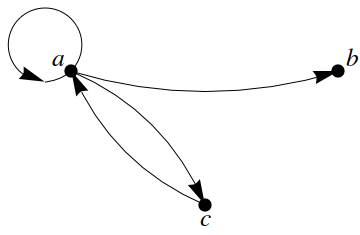} &
\includegraphics[scale=0.3]{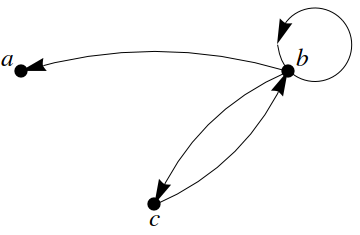}
\end{array}$
\caption{The graph after the first weighing}\label{fig:afterfirstweighing}
\end{figure}

We sum the values for all of the outcomes. Notice that in the example above, after the first weighing $D=3$, $E=2$, and $F=3$.

We first show that $D$ and $F$ are invariant after any number of weighings.

\begin{lemma}
The value $D$ is invariant.
\end{lemma}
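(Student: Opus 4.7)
The plan is to reduce invariance of $D$ over a full algorithm to invariance under a single weighing via a routine induction on the number of weighings. The substantive step is: if $S$ denotes the current set of consistent ordered scenarios ``fake in $x$, chameleon in $y$'' and $S_=, S_<, S_>$ are the subsets surviving each of the three outcomes, then the double-edge counts satisfy $D_= + D_< + D_> = D$.

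To set up, I would interpret an edge $a\to b$ as saying that the scenario $(a,b)$ is consistent with every past weighing, where consistency allows the chameleon to choose independently whether to behave as fake or as real in each weighing so as to produce the observed outcome. A double edge on $\{a,b\}$ is simply the conjunction $(a,b),(b,a)\in S$. Because $S_o\subseteq S$ for each outcome $o$, no new double edge can be created by a weighing, so the task is to show that every pair $\{a,b\}$ which was double before the weighing contributes $1$ to exactly one of $D_=, D_<, D_>$. I would fix such a pair and encode the pans of $a$ and $b$ in the weighing by $\alpha,\beta\in\{-1,0,+1\}$ (left/off/right). Scenario $(a,b)$ lets the chameleon produce effective lighter sets $\{a,b\}$ or $\{a\}$, giving outcomes $\operatorname{sign}(\alpha+\beta)$ or $\operatorname{sign}\alpha$ respectively; scenario $(b,a)$ analogously yields $\operatorname{sign}(\alpha+\beta)$ or $\operatorname{sign}\beta$. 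Therefore both scenarios survive outcome $o$ iff
\[ o\in\{\operatorname{sign}(\alpha+\beta),\operatorname{sign}\alpha\}\cap\{\operatorname{sign}(\alpha+\beta),\operatorname{sign}\beta\}. \]

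A short case check on $\alpha$ and $\beta$ shows this intersection is always the singleton $\{\operatorname{sign}(\alpha+\beta)\}$: when $\operatorname{sign}\alpha=\operatorname{sign}\beta$ the three signs coincide, and when they differ the two ``act-as-real'' outcomes are distinct so only the common ``act-as-fake'' outcome is shared. Hence each previously double pair contributes $1$ to exactly one $D_o$, giving $D_= + D_< + D_> = D$, and induction on the number of weighings delivers the lemma. The one point that I expect to need care with is the adversarial semantics: a single scenario can be consistent with several outcomes of the same weighing because the chameleon may switch its behavior, so outcomes do not partition $S$ the way they would in a standard counterfeit problem; the sign-based intersection above is precisely what contains this subtlety and forces each previously double pair to redistribute as $1+0+0$ across the three outcomes rather than fractionally.
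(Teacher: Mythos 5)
Your proof is correct and follows essentially the same route as the paper's: both arguments reduce to showing that each double edge present before a weighing survives exactly one of the three outcomes, and then summing over outcomes. The paper verifies this by listing, for each outcome, which double edges remain in terms of the piles $A$, $B$, $C$ (inside $C$ and between $A$ and $B$ for balance; inside the lighter pile and between it and $C$ otherwise), which is exactly the partition your $\operatorname{sign}(\alpha+\beta)$ computation produces pair-by-pair; your explicit treatment of the adversarial semantics (outcomes need not partition the scenario set, only the double edges) makes precise a point the paper leaves implicit.
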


\begin{proof}
Consider a weighing of a group of coins $A$ against a group $B$. Denote by $C$ the leftover pile.

If the weighing balances, then either the fake coin is in one group and the chameleon is in the other, or the fake coin is in group $C$ and the chameleon can be anywhere. That means all the double edges between vertices inside $A$ and between vertices$B$, as well as the double edges from $C$ to $A$ or $B$ disappear. Equivalently, only the double edges between $A$ and $B$ and inside $C$ remain.

If $A$ is lighter than $B$, then $B$ cannot contain the fake coin, and $C$ cannot contain both non-real coins. Moreover, if $C$ contains the fake coin, then the chameleon is in $A$. Thus, all the outgoing edges from $B$ and all the directed edges inside $C$ disappear. Also, the directed edges from $C$ to $B$ disappear. Equivalently, only the double edges inside $A$ and the double edges from $A$ to $C$ remain.

By similarity when $A$ is lighter than $B$, then only the double edges inside $B$ and the double edges from $B$ to $C$ remain.

To summarize, each double edge remains for exactly one outcome of the weighing.
\end{proof}

\begin{lemma}
The value $F$ is invariant.
\end{lemma}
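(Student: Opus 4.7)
My plan is to mirror the strategy used for the invariance of $D$: fix a single weighing, classify each pre-weighing loop according to which of the three piles its vertex lies in, and show that each loop is preserved in exactly one of the three post-weighing graphs. One-step invariance then propagates by induction on the number of weighings already performed, giving $F = N$ throughout the algorithm.

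Fix a weighing of pile $A$ against pile $B$, with leftover pile $C$. A loop at a vertex $v$ encodes the configuration in which the group corresponding to $v$ contains both the fake coin and the chameleon, while every other group contains only real coins. I would then split into three cases. If $v \in C$, both pans consist entirely of real coins, so the weighing must balance; the loop at $v$ thus appears only in the equality outcome. If $v \in A$, the chameleon in $A$ may impersonate either a real coin or the fake coin, but in either sub-case pile $B$ contains only real coins while pile $A$ carries at least one genuinely light coin, so the outcome is $A < B$; the loop at $v$ appears only in that outcome. The case $v \in B$ is symmetric and places the loop in the $A > B$ outcome.

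Since the three outcomes are mutually exclusive and together exhaust the possibilities, each pre-weighing loop contributes to exactly one of the three post-weighing graphs. Moreover, no new loops can appear, because each outcome only restricts the set of configurations consistent with the data, so edges (loops included) can only be removed. Summing, the total number of loops after the weighing equals the number before, and iterating over all weighings concludes the proof.

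The only step that requires a moment's care is the case $v \in A$ (symmetrically $v \in B$): one must verify that both admissible chameleon behaviors produce a strict inequality in the same direction, so that the single outcome $A < B$ captures the loop in every sub-case. Beyond that, the argument is a direct analogue of the proof for $D$ and requires no new ideas.
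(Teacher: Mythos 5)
Your proposal is correct and matches the paper's argument: both partition the loops by whether the vertex lies in $A$, $B$, or $C$ and observe that the loop configuration forces the outcome $A<B$, $A>B$, or balance respectively, so each loop survives in exactly one of the three child graphs. The only cosmetic difference is that you track each loop individually while the paper states, outcome by outcome, which loops remain; the content is identical.
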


\begin{proof}
Consider a weighing of a group of coins $A$ against a group $B$. Denote by $C$ the leftover pile.

If the weighing balances then either the fake coin is in one group and the chameleon is in the other, or the fake coin is in group $C$ and the chameleon can be anywhere. That means the loops for all the coins in both groups $A$ and $B$ disappear. Equivalently, only loops in $C$ remain.

If $A$ is lighter than $B$, then $B$ cannot contain the fake coin, and $C$ cannot contain both non-real coins. Moreover, if $C$ contains the fake coin, then the chameleon is in $A$. That means all the loops in $B$ and $C$ disappear. Equivalently, only loops in $A$ remain.

Similarly, if $A$ is heavier than $B$, only loops in $B$ remain.

To summarize, each loop remains for exactly one outcome of the weighing.
\end{proof}

Now we want to discuss the value $E$.

\begin{lemma}
The value $E$ does not decrease after a weighing.
\end{lemma}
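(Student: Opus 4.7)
The plan is to reduce the lemma to a pointwise statement: every vertex $v$ that lies in $E$ before the weighing also lies in $E$ in at least one of the three outcome graphs $G_{=}$, $G_{<}$, $G_{>}$. Summing over vertices then yields $E_{=} + E_{<} + E_{>} \geq E$, which is the desired monotonicity.

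Two observations drive this pointwise claim. First, sinks (vertices with no outgoing edges) are monotone under a weighing: a weighing can only delete edges, never create them, so any vertex $u$ that is a sink in the graph $G$ before the weighing remains a sink in each of the three outcome graphs. Second, every individual directed edge of $G$ survives into at least one outcome. This is a short case check on the nine edge types determined by which of the groups $A$, $B$, $C$ (the two pans and the leftover) the tail and head lie in: for example, $A \to B$ survives in both $G_{=}$ and $G_{<}$, $A \to A$ survives only in $G_{<}$, $C \to C$ survives only in $G_{=}$, $B \to C$ only in $G_{>}$, and so on for all nine combinations. The bookkeeping here is parallel to the one already used in the invariance proofs for $D$ and $F$.

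Putting the two observations together: suppose $v \in E$ in $G$. By definition $v$ has an outgoing edge $v \to u$ with $u$ a sink of $G$. Choose an outcome $G_i$ in which this particular edge is retained; then $u$ is still a sink in $G_i$, and $v$ still has the outgoing edge $v \to u$ in $G_i$, so $v \in E$ in $G_i$. This establishes the pointwise inequality and hence the lemma. No serious obstacle is anticipated: the sink-monotonicity step is immediate, and the edge-survival step is a small bounded-size case analysis that is essentially contained in the argument already carried out for $D$.
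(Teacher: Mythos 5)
Your proof is correct and takes essentially the same route as the paper: both argue pointwise that each vertex of $E$ survives into at least one of the three outcome graphs and then sum over outcomes. Your decomposition into ``every directed edge survives in some outcome'' plus ``sinks remain sinks because weighings only delete edges'' in fact supplies the case-by-case justification that the paper's proof merely asserts, so it is, if anything, the more complete version of the same argument.
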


\begin{proof}
What happens to the vertices from $E$? If a coin from $E$ participates in a weighing,  it stays in $E$ if and only if it is on the lighter pan. 

If a coin from $E$ does not participate in a weighing and the weighing balances, then the coin stays in the set. If the weighing does not balance, the coin may or may not  disappear from $E$. In any case, a coin from $E$ remains in $E$ in at least one of the three new graphs corresponding to different outcomes of the weighing.

Also, new vertices in $E$ can appear. Therefore, the value of $E$ is non-decreasing.
\end{proof}

What happens to the graphs and the values $D$, $E$, and $F$ at the end of a scalable solution or pseudo-solution?

At the end of a scalable solution (pseudo-solution) each output has to be one of the following:

\begin{enumerate}
\item Two coins with one double edge and no loops. 
\item Up to 2 coins from $E$ for a solution or upto 6 coins from $E$ for a pseudo-solution.
\item A coin with a loop, accompanied by not more than one coin from $E$.
\end{enumerate}

\begin{theorem}\label{thm:scalablebound}
At the end of a scalable ($w$,$N$)-pseudo-solution we have: 
$$D + |E-F|/6 + F \le 3^w.$$
\end{theorem}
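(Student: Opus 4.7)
The plan is to classify the leaves of the depth-$w$ decision tree by their output type ($1$, $2$, or $3$) and combine the $D$- and $F$-invariance from Section~\ref{sec:invariants} with the per-leaf caps on $E_\ell$. Writing $n_i$ for the number of type-$i$ leaves and splitting $n_3 = n_3^0 + n_3^1$ according to whether a type-$3$ leaf carries the optional accompanying $E$-coin, the invariance lemmas force $n_1 = N(N-1)/2$ and $n_3 = N$, while the type-$2$ cap gives $E_2 = \sum_{\text{type-}2} E_\ell \le 6 n_2$. A short computation then yields
\[
E - F \;=\; (E_2 + n_3^1) - (n_3^0 + n_3^1) \;=\; E_2 - n_3^0,
\]
so the target inequality becomes $n_1 + n_3 + |E_2 - n_3^0|/6 \le 3^w$.

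Since the depth-$w$ tree has at most $3^w$ branches, $n_1 + n_2 + n_3 \le 3^w$, and it suffices to prove $|E_2 - n_3^0| \le 6 n_2$. In the case $E_2 \ge n_3^0$ this is immediate from $E_2 \le 6 n_2$. The main task is the complementary case $n_3^0 > E_2$, where we need $n_3^0 - E_2 \le 6 n_2$: every loop-only type-$3$ leaf (one with $E_\ell = 0$) unmatched by an $E$-contribution from some type-$2$ leaf must be counterbalanced by extra type-$2$ capacity elsewhere in the tree.

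To close this last case I plan to pass to the scaled $(w+2, 3N)$-extension guaranteed by scalability and compare output capacity across leaf types. Each $(w, N)$-leaf becomes a subtree with nine post-extension outcome-slots; a type-$2$ leaf with the full complement of $6$ $E$-coins tightly uses all nine slots to output pair-answers covering $18$ scaled fake locations, whereas a loop-only type-$3$ leaf needs to distinguish only $6$ scaled $(\text{fake},\text{chameleon})$ sub-scenarios on $3$ scaled coins. Balancing these output-capacities against the $3N(3N+1)/2$ pair-answers needed in the scaled tree, and using that the extended tree has exactly $3^{w+2}$ total slots, should produce the inequality $n_3^0 - E_2 \le 6 n_2$. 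The technical heart of the argument --- turning this capacity intuition into a rigorous inequality, in particular accounting for how the chameleon's adversarial behavior interacts with the output counting and for the possibility of impossible branches --- is what I expect to be the hardest step.
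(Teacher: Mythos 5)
Your framework coincides with the paper's: the paper's (very short) proof is exactly your leaf count. It observes that the type-(i) answers number $D$ and the type-(iii) answers number $F$ (your $n_1=D$, $n_3=F$ from the invariance lemmas), and that the coins of $E$ not absorbed as companions of loops --- at most $F$ of them are so absorbed --- fall into type-(ii) groups of at most six, so the number of type-(ii) answers is at least $(E-F)/6$; summing against the $3^w$ cap on the number of answers gives the bound. This is precisely your reduction to $|E_2-n_3^0|\le 6n_2$ together with your ``easy'' case $E_2\ge n_3^0$.

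The gap is the case you yourself label the main task, $n_3^0>E_2$ (equivalently $E<F$): you never prove $n_3^0-E_2\le 6n_2$, you only describe a hoped-for ``output-capacity'' comparison in the scaled $(w+2,3N)$-tree. That sketch is not a proof, and it is not clear it can become one: the configuration you must exclude --- many loop-only type-(iii) leaves and few type-(ii) leaves --- consumes \emph{fewer} slots in the scaled tree than the tight configurations, so a slot count by itself does not forbid it. For what it is worth, the paper's own proof also establishes only the one-sided inequality $D+\max(E-F,0)/6+F\le 3^w$; the absolute value in the statement is not addressed by its argument, and the only consequence the paper draws (the corollary $D+F\le 3^w$, hence $N(N+1)\le 3^w$) uses just the one-sided version. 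So your instinct that the $E<F$ direction would require a genuinely new idea is correct, but the proposal does not supply that idea; to complete the write-up you should either prove the one-sided statement (which is all the counting gives and all the paper uses) or produce an actual argument for the reverse inequality rather than a plan for one.
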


\begin{proof}
The number of answers of type (i) is $D$. The number of answers of type (iii) is $F$. The number of answers of type (ii) is up to $(E-F)/6$. This is because some of the coins from $E$ could have participated in the type (iii) answer, but no more than $F$ of those. The rest of the coins from $E$ are divided into groups of not more than 6.
\end{proof}

At the end of a scalable ($w$,$N$)-pseudo-solution we have: 
$$D+ F \le 3^w.$$
Plugging in $D=N(N-1)/2$ and $F=N$, we get the corollary:

\begin{corollary}
$$N(N+1) \le 3^w.$$
\end{corollary}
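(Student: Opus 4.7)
The plan is to apply Theorem~\ref{thm:scalablebound} at termination of the pseudo-solution and substitute the invariant values of $D$ and $F$. The two invariance lemmas earlier in this section guarantee that the sums of $D$ and $F$ across the $3^w$ leaves of the weighing tree are unchanged from the initial graph's values, so $D_{\mathrm{tot}} = N(N-1)/2$ and $F_{\mathrm{tot}} = N$. Using nonnegativity of the $|E-F|/6$ term in Theorem~\ref{thm:scalablebound}, I drop it to get $D_{\mathrm{tot}} + F_{\mathrm{tot}} \le 3^w$, substitute the invariants, and obtain $N(N-1)/2 + N \le 3^w$, i.e.\ $N(N+1)/2 \le 3^w$, which is the same as $N(N+1) \le 2\cdot 3^w$. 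That lands a factor of $2$ short of the stated corollary and reproduces Lemma~\ref{thm:ubducedscalablebound}, but by itself it does \textbf{not} give the corollary as printed.

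The factor-of-$2$ gap is exactly the main obstacle, and I want to be upfront about why the obvious route to close it fails. Since every type-(i) leaf contributes $D=1$ and every type-(iii) leaf contributes $F=1$ and the three types partition the leaves, the quantity $D_{\mathrm{tot}} + F_{\mathrm{tot}}$ is literally the count of type (i) plus type (iii) leaves, bounded by the total leaf count $3^w$ with no hidden slack. A tempting repair is to count ordered (fake, chameleon) configurations instead of unordered edges and then invoke a doubled per-leaf contribution, but a single type-(i) leaf with output pair $\{a,b\}$ legitimately serves both ordered configurations (fake$=a$, chameleon$=b$) and (fake$=b$, chameleon$=a$) with a single common output, so the weighing tree is \emph{not} forced to separate them into distinct leaves. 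The naive doubling therefore does not survive, and the same obstruction kills the analogous doubling for loops.

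Consequently, the only route I see to the stated bound $N(N+1) \le 3^w$ would be to extract a scalability-specific constraint beyond what Theorem~\ref{thm:scalablebound} already provides, for instance by showing that the two extra weighings used during a scaling step must commit two distinct outcome branches per double edge at the pre-scaled level, or by lower-bounding $(E-F)/6$ by $N(N+1)/2$ at termination. I have not been able to substantiate either of these, and in fact the $(6,36)$-scalable solution from Appendix~\ref{app:6-36} already saturates $N(N+1)/2 = 666 \le 729 = 3^6$ but violates $N(N+1) = 1332 \le 729$, so the printed corollary cannot hold literally. My proposed proof therefore delivers $N(N+1)/2 \le 3^w$ (i.e.\ $N(N+1) \le 2 \cdot 3^w$), and I would flag the stated statement as a likely typo for $N(N+1)/2 \le 3^w$ rather than claim the unreachable inequality.
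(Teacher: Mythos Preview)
Your argument is exactly the paper's own: it drops the nonnegative $|E-F|/6$ term in Theorem~\ref{thm:scalablebound} to obtain $D+F\le 3^w$, then substitutes the invariants $D=N(N-1)/2$ and $F=N$ to get $N(N+1)/2\le 3^w$. Your diagnosis of the missing factor of $2$ is also correct: the paper's displayed corollary is a typo for $N(N+1)/2\le 3^w$, as is confirmed by the sentence immediately following it (``this corollary provides another proof of Lemma~\ref{thm:ubducedscalablebound}'', which is precisely $N(N+1)/2\le 3^w$) and by your $(6,36)$ counterexample to the printed inequality.
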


For a scalable solution this corollary provides another proof of Lemma~\ref{thm:ubducedscalablebound}. We now see that the same bound is true for a pseudo-solution.

Theorem~\ref{thm:scalablebound} is a very useful tool in trimming the useless branches during the run of our program.

\section{Any Number of Coins}\label{sec:any}

Up to now our goal was to find and/or bound the largest number of coins we can process with a given number of weighings: $N(w)$. What about the original question of finding $\FC(N)$? Can we bound the number of weighings for any number of coins? 

Our exhaustive computer search combined with bounds provides the information about $\FC(N)$ for small $N$ that is collected in Table~\ref{tbl:FC(N)}.

\begin{table}[!h]
  \begin{center}
\begin{tabular}{| r | r | r | r | r | r | r | r | r | r | r |}
  \hline                       
  $N$      & 2 & 3-4 & 5-6 & 7-11 & 12-20    &   21-36 & 37-38 & 39-62\\
  $\FC(N)$ & 1 & 2 &      3 &      4             & 5   & 6  & 6 or 7             & 7\\
  \hline  
\end{tabular}
  \end{center}
  \caption{$\FC(N)$ for a small number of coins}
\label{tbl:FC(N)}
\end{table}

It is possible to extend the algorithms we know to process any number of coins that require not too many weighings. Suppose $N =aK +r$, where $0 \leq r < a$. The following theorem estimates the bound.

\begin{theorem}
$\FC(N) \leq \FC(K) + \FC(2a+r)$.
\end{theorem}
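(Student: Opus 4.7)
The plan is to construct an algorithm for $N$ coins by nesting two FC-algorithms in sequence. Partition the $N=aK+r$ coins into $K$ piles $P_1,\ldots,P_K$ of size $a$ together with a residual set $R$ of size $r$. In Phase~1, treat each pile as a single super-coin of weight equal to the sum of its constituents and run the given $\FC(K)$-algorithm, restricting each weighing to compare equal numbers of whole piles; this uses $\FC(K)$ weighings and returns one or two distinguished piles, which we pad to exactly two by adjoining an arbitrary additional pile if necessary. In Phase~2, form the pool consisting of those two piles together with $R$---altogether $2a+r$ coins---and run the given $\FC(2a+r)$-algorithm on it, using a further $\FC(2a+r)$ weighings.

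For correctness we must check that the true fake always winds up in Phase~2's pool. A pile made entirely of real coins acts like a real super-coin, a pile containing only the fake acts like a fake super-coin, and a pile containing only the chameleon acts like a chameleon super-coin, with the two possible pile-weights corresponding to chameleon-as-real and chameleon-as-fake. A pile hosting \emph{both} non-real coins is strictly lighter than a real pile; crucially, the balance reveals only the sign of each comparison, never its magnitude, so the resulting outcome transcript is indistinguishable from one arising in a genuine FC$(K)$-instance in which that pile plays the fake and some other pile plays a chameleon pretending to be real in every weighing. Consequently, whenever the real fake sits in a pile $P_k$, the transcript Phase~1 observes is also the transcript of some honest FC$(K)$-instance whose fake is $P_k$, and the correctness of $\FC(K)$ on that honest instance forces $P_k$ to appear in the output---hence in Phase~2's pool. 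If instead the fake lies in $R$, it is in the pool by construction.

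In Phase~2 the chameleon might not lie in the pool (it could have been in an unselected pile), but the missing-chameleon case is again indistinguishable from a genuine FC$(2a+r)$-instance in which the chameleon pretends to be real in every weighing, so the $\FC(2a+r)$-algorithm still outputs a pair containing the fake. Summing the two phase counts yields the claimed bound.

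The main obstacle will be the observational equivalence argument underpinning Phase~1: the piles-as-super-coins setup is not literally an instance of FC$(K)$, because a pile may contain both non-real coins or neither, so the usual single-fake/single-chameleon hypothesis is violated. The cleanest way around this is, as sketched, to exploit that the balance produces only ternary information---every transcript reachable by our ill-posed scenario is also reachable by a well-posed FC$(K)$-instance with the fake in the very same pile---so the correctness of the $\FC(K)$-algorithm transfers intact, without needing to design a tailored generalised FC-algorithm.
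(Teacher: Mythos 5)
Your proof is correct and follows exactly the paper's decomposition: run the $\FC(K)$-algorithm on $K$ piles of $a$ coins treated as super-coins, then run the $\FC(2a+r)$-algorithm on the two output piles together with the $r$ leftover coins. The paper's own proof is only a four-line sketch that silently assumes the pile instance behaves like a legitimate $\FC(K)$-instance; the observational-equivalence argument you give (every transcript produced when a pile holds both non-real coins, or when the fake hides in the remainder, is also the transcript of an honest instance, so correctness transfers) is precisely the missing justification, so your version is the more complete one.
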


\begin{proof}
Use the best algorithm for $K$ coins on $K$ groups each containing $a$ coins. The algorithm will output two groups of size $a$. The fake coin is either in these two groups or in the leftover group of $r$ coins. Use the best algorithm for these $2a+r$ coins. It has to output the fake coin.
\end{proof}

\begin{corollary}
$\FC(N) \leq  \FC(\lfloor N/a \rfloor) + \FC(3a-1)$.
\end{corollary}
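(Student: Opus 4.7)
The plan is to derive the corollary as an immediate consequence of the preceding theorem. I would apply the theorem with the specific choice $K = \lfloor N/a \rfloor$, which forces $r = N - aK$ to lie in $[0, a-1]$. Substituting yields
\[
\FC(N) \le \FC(\lfloor N/a \rfloor) + \FC(2a+r),
\]
and since $r \le a-1$ implies $2a + r \le 3a-1$, the claim reduces to establishing the monotonicity inequality $\FC(2a+r) \le \FC(3a-1)$.

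Rather than invoking monotonicity of $\FC$ as a black box (which is subtle in the chameleon setting because one cannot simply weigh coins one does not physically possess), I would realize it concretely from within the algorithm built in the preceding theorem's proof. That first stage outputs $2a+r$ ``suspect'' coins that provably contain the fake, and leaves behind $(K-2)a$ coins that provably do not contain the fake (any one of them could, however, be the chameleon). Whenever $N \ge 3a-1$, these fake-free coins are numerous enough to pad the suspect set up to exactly $3a-1$ coins. The padded set then carries one fake and at most one chameleon, which is a legitimate FC-instance on $3a-1$ coins: if the chameleon happens to lie outside the padded set, it is merely following the adversarial strategy of always mimicking a real coin, a behavior an FC-algorithm must cope with by definition. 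Applying the best $(3a-1)$-coin algorithm finishes the problem in $\FC(3a-1)$ further weighings, completing the two-stage scheme.

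For the residual range $N < 3a-1$, the corollary is essentially vacuous: one has $\FC(\lfloor N/a \rfloor) + \FC(3a-1) \ge \FC(3a-1) \ge \FC(N)$ by the obvious monotonicity of $\FC$ on small arguments. The main (and mild) obstacle is precisely the subtlety that an FC-algorithm designed for exactly one fake and one chameleon still succeeds on an instance in which the chameleon is absent; I expect this to be a quick verification, since a missing chameleon is simply a restriction, not an enlargement, of its adversarial strategy space.
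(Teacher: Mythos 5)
Your proposal is correct and follows the same route the paper intends: the corollary is an immediate substitution $K=\lfloor N/a\rfloor$, $r\le a-1$ into the preceding theorem, and the paper offers no further proof. Your padding argument (topping up the suspect set with the $(K-2)a$ coins already certified fake-free, and noting that an FC-algorithm tolerates an absent chameleon since that is just the "always mimic real" strategy) supplies a justification of the monotonicity step $\FC(2a+r)\le\FC(3a-1)$ that the paper leaves implicit; the only small wrinkle is that for $N<3a-1$ you fall back on bare monotonicity of $\FC$ after all, but this is an assumption the paper itself makes throughout (e.g., in Table~\ref{tbl:FC(N)}).
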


It follows that $\FC(N) \leq  \FC(\lfloor N/3 \rfloor)  + 4$, and $\FC(N) \leq  \FC(\lfloor N/5 \rfloor)  + 5$, and $\FC(N) \leq \FC(\lfloor N/9 \rfloor) + 6$.

Our discussion in Section~\ref{sec:st} supplied us with the following lower bound for $\FC(N)$:
$$\FC(N) \geq \log_3 \binom{N}{2} \approx \log_3 2 \cdot \log_3 N.$$

The corollary above gives us the upper bound: 
$$\FC(N) \leq 3 \log_3 N.$$

Our intuition and experiments suggest that in reality $\FC(N)$ is close to the lower bound.

\section{Acknowledgements}

We are grateful to Julie Sussman, P.P.A., for editing this paper.

\begin{appendices}

\section{A Scalable (4-10)-Solution}\label{app:4-10}
\noindent\texttt{\indent First weighing:\\
0. 1 2 3  v  4 5 6 : $\Rightarrow$ 1, $\Rightarrow$ 2, $\Rightarrow$ 3. sym\\
\indent Second weighing:\\
1. 1 7  v  2 8 : $\Rightarrow$ 4, $\Rightarrow$ 5, $\Rightarrow$ 6. sym\\
2. 7 8  v  9 10 : $\Rightarrow$ 7, $\Rightarrow$ 8, $\Rightarrow$ 9. sym\\
\indent Third weighing:\\
4. 4 9  v  5 10 : $\Rightarrow$ 13, $\Rightarrow$ 14, $\Rightarrow$ 15. sym\\
5. 4 9  v  5 10 : $\Rightarrow$ 16, $\Rightarrow$ 17, $\Rightarrow$ 18. sym\\
7. 1  v  2 : $\Rightarrow$ 22, (1, 3), $\Rightarrow$ 24. sym\\
8. 7  v  8 : $\Rightarrow$ 25, $\Rightarrow$ 26, $\Rightarrow$ 27. sym\\
\indent Fourth weighing:\\
13. 8 7  v  10 9 : (3,6), (7,8), (9,10).\\
14. 3  v  9 : (4,9), (3,4), (9).\\
16. 1  v  7 : (6,7), (1,6), (7).\\
17. 4 1  v  7 9 : (7,9), (1,4), (7,9).\\
22. 1  v  3 : (2,3), (1,2), (3).\\
25. 1  v  2 : (3), (1), (2).\\
26. 1  v  2 : (3,7), (1,7), (2,7)}.

\section{A Non-scalable (4,11)-Solution}\label{app:4-11}

\noindent\texttt{\indent First weighing:\\
0. 1 2 3  v  4 5 6 : $\Rightarrow$ 1, $\Rightarrow$ 2, $\Rightarrow$ 3.  sym\\
\indent Second weighing:\\
1. 1 2 7 8  v  3 9 10 11 : $\Rightarrow$ 4, $\Rightarrow$ 5, $\Rightarrow$ 6. \\
2. 7 8  v  9 10 : $\Rightarrow$ 7, $\Rightarrow$ 8, $\Rightarrow$ 9.  sym  \\
\indent Third weighing:\\
4. 7  v  8 : $\Rightarrow$ 13, $\Rightarrow$ 14, $\Rightarrow$ 15.  sym \\
5. 3 4 1  v  5 7 8 : $\Rightarrow$ 16, $\Rightarrow$ 17, $\Rightarrow$ 18. \\
6. 4 9 10  v  5 11 1 : $\Rightarrow$ 19, $\Rightarrow$ 20, $\Rightarrow$ 21. \\
7. 1  v  2 : $\Rightarrow$ 22, $\Rightarrow$ 23, $\Rightarrow$ 24.  sym \\
8. 7  v  8 : $\Rightarrow$ 25, $\Rightarrow$ 26, $\Rightarrow$ 27.  sym \\
\indent Fourth weighing:\\
13. 4 9  v  5 10 : (6, 11), (4, 9), (5, 10).\\
14. 9  v  10 : (7, 11), (7, 9), (7, 10).\\   
16. 6 2  v  7 8 : (1, 5), (2, 6), (7, 8).\\
17. 6  v  2 : (1, 4), (1, 6), (2, 4). \\
18. 5 2  v  7 8 : (7, 8), (2, 5), (7, 8).\\ 
19. 6 3  v  9 2 : (10, 11), (3, 6), (9, 11). \\
20. 4 3  v  9 10 : (9, 10), (3, 4), (9, 10). \\
21. 3  v  11 : (5, 11), (3, 5), (11).\\ 
22. 2 1  v  3 11 : (3, 11), (1, 2), (3, 11). \\
23. 11  v  2 : (1, 3), (1, 11), (1). \\
25. 1  v  2 : (3), (1), (2).\\
26. 1  v  2 : (3, 7), (1, 7), (2, 7).}   

\section{A Scalable (5,20)-Solution}\label{app:5-20}
\noindent\texttt{\indent First weighing:\\
0. 1 2 3 4 5 6  v  7 8 9 10 11 12 : $\Rightarrow$ 1, $\Rightarrow$ 2, $\Rightarrow$ 3.  sym\\
\indent Second weighing:\\
1. 7 1 2 3  v  13 14 15 16 : $\Rightarrow$ 4, $\Rightarrow$ 5, $\Rightarrow$ 6.\\
2. 13 14 15 16  v  17 18 19 20 : $\Rightarrow$ 7, $\Rightarrow$ 8, $\Rightarrow$ 9.  sym\\
\indent Third weighing:\\
4. 4 13 14 17 18  v  5 15 16 19 20 : $\Rightarrow$ 13, $\Rightarrow$ 14, $\Rightarrow$ 15.  sym\\
5. 1 4  v  2 5 : $\Rightarrow$ 16, $\Rightarrow$ 17, $\Rightarrow$ 18.  sym\\
6. 13 17 18  v  14 19 20 : $\Rightarrow$ 19, $\Rightarrow$ 20, $\Rightarrow$ 21.  sym\\
7. 1 2  v  3 4 : $\Rightarrow$ 22, $\Rightarrow$ 23, $\Rightarrow$ 24.  sym\\
8. 1 2 3 13  v  4 5 6 14 : $\Rightarrow$ 25, $\Rightarrow$ 26, $\Rightarrow$ 27.  sym\\
\indent Fourth weighing:\\
13. 8 9 19  v  10 11 17 : $\Rightarrow$ 40, $\Rightarrow$ 41, $\Rightarrow$ 42.\\
14. 5 8 9 10  v  13 14 17 18 : $\Rightarrow$ 43, $\Rightarrow$ 44, $\Rightarrow$ 45.\\
16. 8 9 17 18  v  10 11 19 20 : $\Rightarrow$ 49, $\Rightarrow$ 50, $\Rightarrow$ 51.  sym\\
17. 8 9 17 18  v  10 11 19 20 : $\Rightarrow$ 52, $\Rightarrow$ 53, $\Rightarrow$ 54.  sym\\
19. 13 19 17  v  15 16 20 : $\Rightarrow$ 58, $\Rightarrow$ 59, $\Rightarrow$ 60.\\
20. 17  v  18 : $\Rightarrow$ 61, $\Rightarrow$ 62, $\Rightarrow$ 63.  sym\\
22. 1  v  2 : $\Rightarrow$ 67, $\Rightarrow$ 68, $\Rightarrow$ 69.  sym\\
23. 5  v  6 : $\Rightarrow$ 70, $\Rightarrow$ 71, $\Rightarrow$ 72.  sym\\
25. 4 1 2  v  5 15 16 : $\Rightarrow$ 76, $\Rightarrow$ 77, $\Rightarrow$ 78.\\
26. 15  v  16 : $\Rightarrow$ 79, $\Rightarrow$ 80, $\Rightarrow$ 81.  sym\\
\indent Fifth weighing:\\
40. 12 6  v  20 18 : (17, 19), (6, 12), (18, 20).\\
41. 7 8  v  19 18 : (6, 9), (6, 8), (18, 19).\\
42. 7 10  v  20 17 : (6, 11), (6, 10), (17, 20).\\
43. 1 11  v  17 18 : (4, 12), (4, 11), (17, 18).\\
44. 8  v  9 : (4, 10), (4, 8), (4, 9). \\
45. 13 17  v  14 18 : (17, 18), (13, 17), (14, 18). \\
49. 12  v  6 : (3, 7), (3, 12), (6, 7).\\
50. 4 8  v  17 18 : (3, 9), (3, 8), (17, 18).\\
52. 12  v  4 : (1, 7), (1, 12), (4, 7).\\
53. 2 8  v  17 18 : (1, 9), (1, 8), (17, 18).\\
58. 14 18  v  15 16 : (13, 20), (14, 18), (15, 16).\\
59. 19  v  17 : (13, 14), (13, 19), (14, 17).\\
60. 15 20  v  16 1 : (15, 16), (15, 20), (16).\\
61. 15  v  16 : (13), (13, 15), (13, 16). \\
62. 15  v  16 : (13, 17), (15, 17), (16, 17). \\
67. 3 5  v  4 6 : (5, 6), (3, 5), (4, 6). \\
68. 2 3  v  5 6 : (1, 4), (1, 3), (5, 6).\\
70. 1  v  2 : (1, 2), (1), (2). \\
71. 1  v  2 : (5), (1, 5), (2,5). \\
76. 3 14  v  15 16 : (6, 13), (3, 14), (15, 16).\\
77. 4 13  v  1 3 : (2, 14), (4, 13), (1, 14).\\
78. 15  v  16 : (5, 13), (15), (16). \\
79. 1  v  2 : (3, 13), (1, 13), (2,13). \\
80. 1  v  2 : (3, 15), (1, 15), (2,15).} 

\section{A Scalable (6,36)-solution}\label{app:6-36}
\noindent\texttt{\indent First weighing:\\
0. 1 2 3 4 5 6 7 8 9 10 11 12  v  13 14 15 16 17 18 19 20 21 22 23 24 : $\Rightarrow$ 1, $\Rightarrow$ 2, $\Rightarrow$ 3. sym \\
\indent Second weighing:\\
1. 13 14 15 1 2 3 4 25 26 27 28 29  v  5 6 7 8 9 30 31 32 33 34 35 36 : $\Rightarrow$ 4, $\Rightarrow$ 5, $\Rightarrow$ 6. \\
2. 1 2 3 25 26 27 28 29 30  v  4 5 6 31 32 33 34 35 36 : $\Rightarrow$ 7, $\Rightarrow$ 8, $\Rightarrow$ 9.  sym  \\
\indent Third weighing:\\ 
4. 13 16 17 25 26  v  14 18 19 27 28 : $\Rightarrow$ 13, $\Rightarrow$ 14, $\Rightarrow$ 15.  sym \\
5. 16 17 18 19 10 25 26  v  20 21 22 23 11 27 28 : $\Rightarrow$ 16, $\Rightarrow$ 17, $\Rightarrow$ 18.  sym \\
6. 16 17 5 6 30 31  v  7 32 33 34 35 36 : $\Rightarrow$ 19, $\Rightarrow$ 20, $\Rightarrow$ 21. \\
7. 1 2 3 31 32 33  v  7 8 9 10 11 12 : $\Rightarrow$ 22, $\Rightarrow$ 23, $\Rightarrow$ 24. \\
8. 25 26 27  v  28 29 30 : $\Rightarrow$ 25, $\Rightarrow$ 26, $\Rightarrow$ 27.  sym  \\
\indent Fourth weighing:\\ 
13. 20 5 6 10 30 31  v  21 7 8 11 32 33 : $\Rightarrow$ 40, $\Rightarrow$ 41, $\Rightarrow$ 42.  sym\\ 
14. 5 6 7 10 30 31 32 25  v  8 11 12 33 34 35 36 26 : $\Rightarrow$ 43, $\Rightarrow$ 44, $\Rightarrow$ 45. \\
16. 24 12 25 29  v  1 2 3 30 : $\Rightarrow$ 49, $\Rightarrow$ 50, $\Rightarrow$ 51. \\
17. 13 1 2 29  v  16 3 25 26 : $\Rightarrow$ 52, $\Rightarrow$ 53, $\Rightarrow$ 54. \\
19. 18 19 20 32 33 30  v  21 22 23 34 35 31 : $\Rightarrow$ 58, $\Rightarrow$ 59, $\Rightarrow$ 60.  sym \\
20. 16 18 19 30 31  v  20 21 22 5 1 : $\Rightarrow$ 61, $\Rightarrow$ 62, $\Rightarrow$ 63. \\
21. 18 19 32 33  v  20 7 34 1 : $\Rightarrow$ 64, $\Rightarrow$ 65, $\Rightarrow$ 66. \\
22. 25 26 27  v  28 29 30 : $\Rightarrow$ 67, $\Rightarrow$ 68, $\Rightarrow$ 69.  sym \\
23. 31 32 33  v  34 35 36 : $\Rightarrow$ 70, $\Rightarrow$ 71, $\Rightarrow$ 72. \\
24. 7 8 9  v  10 11 12 : $\Rightarrow$ 73, $\Rightarrow$ 74, $\Rightarrow$ 75.  sym \\
25. 7 8 9  v  10 11 12 : $\Rightarrow$ 76, $\Rightarrow$ 77, $\Rightarrow$ 78.  sym \\
26. 7 8 9  v  10 11 12 : $\Rightarrow$ 79, $\Rightarrow$ 80, $\Rightarrow$ 81.  sym  \\
\indent Fifth weighing:\\ 
40. 22 23 24 12  v  34 35 36 29 : $\Rightarrow$ 121, $\Rightarrow$ 122, $\Rightarrow$ 123. \\
41. 20 12 30 31 29  v  22 23 24 10 32 : $\Rightarrow$ 124, $\Rightarrow$ 125, $\Rightarrow$ 126. \\
43. 16 17 30 31  v  33 34 35 14 : $\Rightarrow$ 130, $\Rightarrow$ 131, $\Rightarrow$ 132. \\
44. 1 16 17 10  v  30 31 32 25 : $\Rightarrow$ 133, $\Rightarrow$ 134, $\Rightarrow$ 135. \\
45. 1 16 11  v  33 34 35 : $\Rightarrow$ 136, $\Rightarrow$ 137, $\Rightarrow$ 138. \\
49. 13 14 15 4  v  27 28 26 29 : $\Rightarrow$ 148, $\Rightarrow$ 149, $\Rightarrow$ 150. \\
50. 13 27 25  v  14 29 5 : $\Rightarrow$ 151, $\Rightarrow$ 152, $\Rightarrow$ 153. \\
51. 1  v  2 : $\Rightarrow$ 154, $\Rightarrow$ 155, $\Rightarrow$ 156.  sym \\
52. 14 25 26 29  v  17 18 19 4 : $\Rightarrow$ 157, $\Rightarrow$ 158, $\Rightarrow$ 159. \\
53. 17 10  v  18 29 : $\Rightarrow$ 160, $\Rightarrow$ 161, $\Rightarrow$ 162. \\
54. 17 18 4  v  25 26 5 : $\Rightarrow$ 163, $\Rightarrow$ 164, $\Rightarrow$ 165. \\
58. 24 8 9 32  v  34 35 36 30 : $\Rightarrow$ 175, $\Rightarrow$ 176, $\Rightarrow$ 177. \\
59. 18 32  v  19 33 : $\Rightarrow$ 178, $\Rightarrow$ 179, $\Rightarrow$ 180.  sym \\
61. 18 23 8  v  19 24 9 : $\Rightarrow$ 184, $\Rightarrow$ 185, $\Rightarrow$ 186.  sym \\
62. 18 8 9  v  30 31 1 : $\Rightarrow$ 187, $\Rightarrow$ 188, $\Rightarrow$ 189. \\
63. 20 23  v  21 24 : $\Rightarrow$ 190, $\Rightarrow$ 191, $\Rightarrow$ 192.  sym \\
64. 21 22 34 32 33  v  23 24 35 36 2 : $\Rightarrow$ 193, $\Rightarrow$ 194, $\Rightarrow$ 195. \\
65. 35  v  36 : $\Rightarrow$ 196, $\Rightarrow$ 197, $\Rightarrow$ 198.  sym \\
66. 21 22 23  v  34 35 36 : $\Rightarrow$ 199, $\Rightarrow$ 200, $\Rightarrow$ 201. \\
67. 7 8 34  v  9 10 35 : $\Rightarrow$ 202, $\Rightarrow$ 203, $\Rightarrow$ 204.  sym \\
68. 25  v  26 : $\Rightarrow$ 205, $\Rightarrow$ 206, $\Rightarrow$ 207.  sym \\
70. 1  v  2 : $\Rightarrow$ 211, $\Rightarrow$ 212, $\Rightarrow$ 213.  sym \\
71. 31  v  32 : $\Rightarrow$ 214, $\Rightarrow$ 215, $\Rightarrow$ 216.  sym \\
72. 34  v  35 : $\Rightarrow$ 217, $\Rightarrow$ 218, $\Rightarrow$ 219.  sym \\
73. 7  v  8 : $\Rightarrow$ 220, $\Rightarrow$ 221, $\Rightarrow$ 222.  sym \\
74. 7  v  8 : $\Rightarrow$ 223, $\Rightarrow$ 224, $\Rightarrow$ 225.  sym \\
76. 1  v  2 : $\Rightarrow$ 229, $\Rightarrow$ 230, $\Rightarrow$ 231.  sym \\
77. 7  v  8 : $\Rightarrow$ 232, $\Rightarrow$ 233, $\Rightarrow$ 234.  sym \\
79. 25  v  26 : $\Rightarrow$ 238, $\Rightarrow$ 239, $\Rightarrow$ 240.  sym \\
80. 25  v  26 : $\Rightarrow$ 241, $\Rightarrow$ 242, $\Rightarrow$ 243.  sym  \\
\indent Sixth weighing:\\ 
121. 21 10  v  20 11 : (9, 15), (10, 21), (11, 20). \\
122. 22  v  23 : (12, 24), (12, 22), (12, 23).  \\
123. 34  v  35 : (29, 36), (29, 34), (29, 35). \\
124. 20 10  v  5 1 : (6, 15), (10, 20), (5, 15).\\ 
125. 20 12  v  30 16 : (29, 31), (12, 20), (29, 30). \\
126. 22  v  23 : (10, 24), (10, 22), (10, 23). \\
130. 36 25  v  32 26 : (9, 13), (25, 36), (26, 32). \\
131. 16 17  v  30 13 : (26, 31), (16, 17), (26, 30). \\
132. 33  v  34 : (25, 35), (25, 33), (25, 34). \\
133. 5  v  6 : (7, 13), (5, 13), (6, 13). \\
134. 16  v  17 : (10), (10, 16), (10, 17). \\
135. 30  v  31 : (25, 32), (25, 30), (25, 31). \\
136. 17 12  v  36 26 : (8, 13), (12, 17), (26, 36). \\
137. 17  v  12 : (11, 16), (11, 17), (12, 16). \\
138. 33  v  34 : (26, 35), (26, 33), (26, 34). \\
148. 1  v  2 : (3, 24), (1, 24), (1, 24). \\
149. 13  v  14 : (4, 15), (4, 13), (4, 14). \\
150. 27  v  28 : (26, 29), (26, 27), (26, 28). \\
151. 24 4  v  28 29 : (12, 15), (4, 24), (28, 29). \\
152. 13 12  v  27 1 : (25, 28), (12, 13), (25, 27). \\
153. 12  v  29 : (14, 29), (12, 14), (29). \\
154. 13  v  14 : (3, 15), (3, 13), (3, 14). \\
155. 13  v  14 : (1, 15), (1, 13), (1, 13). \\
157. 15 10  v  1 3 : (2, 16), (10, 15), (1, 16). \\
158. 14 10  v  25 3 : (26, 29), (10, 14), (25, 29).\\ 
159. 17  v  18 : (4, 19), (4, 17), (4, 18). \\
160. 13 29  v  1 3 : (2, 19), (13, 29), (1, 19). \\
161. 13 10  v  1 3 : (2, 17), (10, 13), (1, 17). \\
162. 1  v  2 : (18, 29), (1, 18), (2, 29). \\
163. 5 19  v  25 26 : (3, 16), (3, 19), (25, 26). \\
164. 16 4  v  17 1 : (3, 18), (4, 16), (3, 17). \\
165. 25  v  26 : (25, 26), (25), (26).  \\
175. 1 16  v  33 31 : (7, 17), (7, 16), (31, 33). \\
176. 1 8  v  32 31 : (9, 24), (8, 24), (31, 32). \\
177. 34  v  35 : (30, 36), (30, 34), (30, 35). \\
178. 1 8  v  36 30 : (9, 20), (8, 20), (30, 36).\\ 
179. 1 8  v  32 30 : (9, 18), (8, 18), (30, 32). \\
184. 17 6  v  30 31 : (5, 16), (6, 17), (30, 31). \\
185. 17 8  v  23 6 : (5, 18), (8, 17), (6, 23). \\
187. 1 19  v  30 31 : (6, 16), (6, 19), (30, 31). \\
188. 18 6  v  8 1 : (9, 16), (6, 18), (8, 16). \\
189. 30  v  31 : (30, 31), (30), (31). \\
190. 17  v  6 : (5, 22), (5, 17), (6, 22). \\
191. 23  v  6 : (5, 20), (5, 23), (6, 20). \\
193. 1 18  v  35 36 : (7, 19), (7, 18), (35, 36). \\
194. 21 22  v  32 2 : (33, 34), (21, 22), (32, 34). \\
195. 23 35  v  24 36 : (35, 36), (23, 35), (24,36). \\
196. 18 32  v  19 33 : (32, 33), (18, 32), (19,33). \\
197. 32  v  33 : (35), (32, 35), (33, 35). \\
199. 24  v  34 : (7, 20), (7, 24), (34). \\
200. 21  v  22 : (7, 23), (7, 21), (7,22). \\ 
201. 35  v  36 : (34), (34, 35), (34, 36). \\
202. 4 36  v  11 12 : (5, 6), (4, 36), (11, 12). \\
203. 34  v  4 : (7, 8), (34), (). \\
205. 4  v  5 : (6, 27), (4, 27), (5, 27). \\
206. 4  v  5 : (6, 25), (4, 25), (5, 25). \\
211. 4  v  5 : (3, 6), (3, 4), (3, 5). \\
212. 4  v  5 : (1, 6), (1, 4), (1, 5). \\
214. 1  v  2 : (3, 33), (1, 33), (2, 33). \\
215. 1  v  2 : (3, 31), (1, 31), (2, 31). \\
217. 1  v  2 : (3, 36), (1, 36), (2, 36). \\
218. 1  v  2 : (3, 34), (1, 34), (2, 34). \\
220. 10  v  11 : (9, 12), (9, 10), (9, 11). \\
221. 10  v  11 : (7, 12), (7, 10), (7, 11). \\
223. 7  v  9 : (8, 9), (7, 8), (9). \\
224. 9  v  1 : (7), (7, 9), (7). \\
229. 1  v  3 : (2, 3), (1, 2), (3).\\ 
230. 3  v  2 : (1), (1, 3), (1). \\
232. 1  v  2 : (3, 9), (1, 9), (2, 9). \\
233. 1  v  2 : (3, 7), (1, 7), (2, 7). \\
238. 1  v  2 : (3, 27), (1, 27), (2, 27).\\
239. 1  v  2 : (3, 25), (1, 25), (2, 25). \\
241. 7  v  8 : (9, 27), (7, 27), (8, 27). \\
242. 7  v  8 : (9, 25), (7, 25), (8, 25). }

\end{appendices}

\end{document}